\documentclass[preprint,3p,times,12pt]{elsarticle}
\usepackage{amsmath}
\usepackage{diagbox}
\usepackage{mathrsfs}
\usepackage{tikz}
\usepackage{multirow}
\usepackage{slashbox}
\usepackage{hyperref}
\usepackage{listings}
\usepackage{multirow}
\usepackage{amssymb}
\usepackage{amsthm}
\usepackage{epstopdf}
\usepackage{algorithm,float}
\usepackage{algpseudocode}
\usepackage{subfigure}
\usepackage{tabularx,booktabs}
\usepackage{caption}
\usepackage{verbatim}
\usepackage[figuresright]{rotating}
\usepackage{CJK}
\usepackage[all]{xy}
\renewcommand{\thealgorithm}{\arabic{algorithm}}
\biboptions{sort&compress}
\theoremstyle{plain}
\newtheorem{theorem}{Theorem}[section]
\newtheorem{lemma}[theorem]{Lemma}
\newtheorem{proposition}[theorem]{Proposition}
\theoremstyle{definition}
\newtheorem{definition}[theorem]{Definition}
\newtheorem{remark}[theorem]{Remark}
\newtheorem{corollary}[theorem]{Corollary}
\newtheorem{example}[theorem]{Example}

\journal{xxxx}

\makeatletter
\newif\if@borderstar
\def\bordermatrix{\@ifnextchar*{%
\@borderstartrue\@bordermatrix@i}{\@borderstarfalse\@bordermatrix@i*}%
}
\def\@bordermatrix@i*{\@ifnextchar[{\@bordermatrix@ii}{\@bordermatrix@ii[()]}}
\def\@bordermatrix@ii[#1]#2{%
\begingroup
\m@th\@tempdima8.75\p@\setbox\z@\vbox{%
\def\cr{\crcr\noalign{\kern 2\p@\global\let\cr\endline }}%
\ialign {$##$\hfil\kern 2\p@\kern\@tempdima &\thinspace %
\hfil $##$\hfil &&\quad\hfil $##$\hfil\crcr\omit\strut %
\hfil\crcr\noalign{\kern -\baselineskip}#2\crcr\omit %
\strut\cr}}%
\setbox\tw@\vbox{\unvcopy\z@\global\setbox\@ne\lastbox}%
\setbox\tw@\hbox{\unhbox\@ne\unskip\global\setbox\@ne\lastbox}%
\setbox\tw@\hbox{%
$\kern\wd\@ne\kern -\@tempdima\left\@firstoftwo#1%
\if@borderstar\kern2pt\else\kern -\wd\@ne\fi%
\global\setbox\@ne\vbox{\box\@ne\if@borderstar\else\kern 2\p@\fi}%
\vcenter{\if@borderstar\else\kern -\ht\@ne\fi%
\unvbox\z@\kern-\if@borderstar2\fi\baselineskip}%
\if@borderstar\kern-2\@tempdima\kern2\p@\else\,\fi\right\@secondoftwo#1$%
}\null \;\vbox{\kern\ht\@ne\box\tw@}%
\endgroup
}
\makeatother
\allowdisplaybreaks

\allowdisplaybreaks

\makeatletter
\def\ps@pprintTitle{%
	\let\@oddhead\@empty
	\let\@evenhead\@empty
	\let\@oddfoot\@empty
	\let\@evenfoot\@empty
}
\makeatother

\begin{document}
\begin{frontmatter}

\title{\textbf{$PF$-well-filtered spaces}}
\author{\textbf{Ruoxi Chen}\textsuperscript{a}}
\ead{chenruoxi@hnu.edu.cn}                 
\author{\textbf{Qingguo Li}\textsuperscript{a}}   
\ead{liqingguoli@aliyun.com}  
\author{\textbf{Longchun Wang}\corref{cor1}\textsuperscript{b}}
\ead{longchunw@163.com}

\address{\textsuperscript{a}School of Mathematics, Hunan University, Changsha, Hunan, 410082, \textit{P. R. China}}
\address{\textsuperscript{b}School of Mathematical Sciences, Qufu Normal University, Qufu, Shandong 273165, China}
\cortext[cor1]{Corresponding author.}

\begin{abstract}
	This paper solves two open problems concerning the well-filteredness of topological spaces. The first, due to Shen, Xi and Zhao\cite{shen2023further}, asks whether a space must be open well-filtered whenever its upper space is so; the answer is negative; the Hofmann-Lawson space provides a counterexample. The second, posed by Li, Yuan and Zhao\cite{li2020unified}, inquires whether $PF$-well-filteredness and $PF$-sobriety are equivalent under the assumption of local compactness; we answer it in the affirmative. Subsequently, we give characterizations of $PF$-sobriety and $PF$-filteredness for $T_0$ spaces, respectively. On the basis of the characterizations, we obtain the following result: $PF$-sober and $PF$-well-filter coincide on Rudin spaces, which extends the previously established equivalence for locally compact spaces, as every locally compact space is a Rudin space. Moreover, we also explore function spaces involving the $PF$-well-filtered property, together with the reflective hull of the category of $PF$-well-filtered spaces in $T_0$ spaces.

\end{abstract}

\begin{keyword}
open well-filtered space; Hofmann-Lawson space; $PF$-well-filtered space; $PF$-sober space; function space; reflective hull
\end{keyword}

\end{frontmatter}

\section{Introduction}

Sobriety, well-filteredness and $d$-spaces are fundamental notions in non-Hausdorff topology and domain theory\cite{gierz2003continuous}\cite{goubault2013non}. They describe different aspects of the interaction between topology and the specialization order. These properties satisfy
\[
\text{sobriety}\ \Longrightarrow\ \text{well-filteredness}
\Longrightarrow\ d\text{-space}
\]
and neither implication is reversible in general.

Well-filtered spaces are a class of topological spaces introduced by Heckmann that have been extensively studied to date\cite{gierz2003continuous}. The property of well-filteredness guarantees that all nonempty compact saturated sets in the space form a dcpo under the reverse inclusion order, making them particularly important in logical and denotational semantics\cite{goubault2013non}. Several related notions have been introduced by modifying the families or the open sets involved in this condition. These include weak well-filteredness\cite{lu2017weak}, open well-filteredness\cite{shen2020open}, strong well-filteredness\cite{xu2025strongly} and $PF$-well-filteredness\cite{li2020unified}. Studying these variants helps identify which parts of the theory depend on the full well-filteredness condition and which remain valid under weaker assumptions.

Open well-filteredness, introduced by Shen, Xi, Xu and Zhao\cite{shen2020open} replaces filtered families of compact saturated sets with families of open sets filtered with respect to the way-below relation on \(\mathcal O(X)\). They proved that every well-filtered space is open well-filtered and every core-compact open well-filtered space is sober, which solved the Jia-Jung problem. They subsequently studied these spaces. In 2023, Shen, Xi and Zhao proved that  open well-filteredness is preserved by the Smyth powerspace and they asked "Is it true that a space is open well-filtered if its upper space is open well-filtered?"\cite{shen2023further}. In Section 3 of this paper, we give a negative answer to this problem by using the Hofmann-Lawson space.

Another line of research concerns a new variant of well-filteredness. In \cite{li2020unified}, Li, Yuan and Zhao introduced the notion of $PF$-well-filteredness by modifying the definition of well-filteredness: the intersection of a filtered family of nonempty compact saturated subsets is required to be a principal filter $\uparrow x$ (with respect to the specialization order) contained in an open set U. In an analogous manner, they also introduced the notion of $PF$-sobriety. They proved that $PF$-sobriety implies $PF$-well-filteredness, and posed the question of whether the two properties are equivalent for locally compact spaces. In Section 3 of this paper, we provide an affirmative answer to this problem and establish characterizations of $PF$-sobriety and $PF$-well-filteredness for $T_0$ spaces, respectively. On the basis of the characterizations, we obtain the following result: $PF$-sobriety and $PF$-well-filteredness coincide on the Rudin spaces, which extends the previously established equivalence for locally compact spaces, as every locally compact space is a Rudin space. In Section 4 of this paper, we supply a counterexample to show that for nonempty $T_0$ topological spaces $X$ and $Y$, if $Y$ is a $PF$-well-filtered ($PF$-sober) space, then the function space consisting of all continuous maps from $X$ to $Y$ equipped with the Isbell topology may not be a $PF$-well-filtered ($PF$-sober) space even when $X$ is a core-compact space. We also prove that if the function space composed of all continuous maps from $X$ to $Y$ equipped with the Isbell topology is $PF$-well-filtered ($PF$-sober), then $Y$ is also $PF$-well-filtered ($PF$-sober) even though $PF$-sobriety and $PF$-well-filteredness are not preserved under retracts. Finally, in Section 5 of this paper, we show that the reflective hull of the category of $PF$-well-filtered ($PF$-sober) spaces with continuous maps in $TOP_0$ is $TOP_0$.

\section{Preliminaries}
In this section, we review some basic concepts to be used in this paper. For more details, see the references \cite{gierz2003continuous}.

Let $P$ be a poset and $F \subseteq P$. We denote that $\downarrow F=\{x\in P \mid \exists f\in F, x\le f\}$ and $\uparrow F=\{x\in P\mid \exists f\in F, x\ge f \}$. For each $x\in P$, $\downarrow \{x\}=\downarrow x=\{y\in P\mid y\le x\}$ and $\uparrow \{x\}=\uparrow x=\{y\in P\mid y\ge x\}$. A subset $F$ is called a lower set(resp., an upper set) if $F=\downarrow F$(resp., $F=\uparrow F$). We write $F^u=\{x\in P\mid y\le x \,\text{for each} \,y\in F\}$ and $F^l=\{x\in P\mid y\ge x \,\text{for each} \,y\in F\}$ for the sets of upper and lower bounds of $F$, respectively, and set $F^{ul}=(F^{u})^{l}$. The subset $F$ is called a cut if $F=F^{ul}$.

A nonempty subset $D$ of $P$ is directed if every finite subset of $D$ has an upper bound in $D$. The poset $P$ is called a dcpo if every directed subset $D$ in $P$ has a supremum. A subset $U$ of $P$ is Scott open if (1) $U=\uparrow U$ and (2) for any directed subset $D$ for which $\bigvee D$ exists, $\bigvee D \in U$ implies that $D\cap U\neq \emptyset$. All Scott open subsets of $P$ form a topology, and we call this topology the Scott topology on $P$.

For any topological space $X$, let $F\subseteq X$. $F$ is called irreducible if it is nonempty and $F\subseteq F_1\cup F_2$ implies that $F\subseteq F_1$ or $F\subseteq F_2$ for any $F_1, F_2\in \Gamma(X)$, where $\Gamma(X)$ is the set of all the closed sets of $X$. The set of all irreducible closed sets of $X$ is denoted by $Irr_C(X)$. $F$ is called saturated if it is the intersection of all open sets containing it. The set of all open sets is denoted by $\mathcal{O} (X)$. If $X$ is a $T_0$ space, then the specialization order $\le_X$(in short,$\le$) on $X$ is defined by $x\le y$ if and only if $x\in cl(\{y\})$. Thus the open set of $X$ is an upper set with the specialization order, the closed set of $X$ is a downset. The Smyth power space of a $T_0$ space $X$, denoted by $Q(X)$, which is also called the upper space of $X$, is the set of all nonempty compact saturated sets of $X$ with the upper Vietoris topology. A $T_0$ space is called well-filtered if an open set $U$ contains a filtered intersection of compact saturated sets, then $U$ must already contain one of them. A $T_0$ space is called open well-filtered if an open set $U$ contains a $\ll$-filtered intersection of open sets, then $U$ must already contain one of them. A $T_0$ space is called $PF$-well-filtered if an open set $U$ contains a principal filter $\uparrow x$, which is also a filtered intersection of compact saturated sets, then $U$ must already contain one of them. A $T_0$ space is called $PF$-sober if an open set $U$ contains a principal filter $\uparrow x$, which is also an intersection of an irreducible set of the upper space, then $U$ must already contain one of them. A nonempty subset $A$ of $X$ is said to have the Rudin property if there exists a filtered family $\mathcal{K}\subseteq Q(X)$ such that $\overline{A}$ is a minimal closed set that intersects all members of $\mathcal{K}$. Let $RD(X)=\{A\in\Gamma(X)\mid \text{A has Rudin property}\}$. The sets in $RD(X)$ will also be called the Rudin sets. A $T_0$ space $X$ is called a Rudin space if $Irr_C(X)=RD(X)$\cite{shen2019well}\cite{xu2020t0}.

Now we revisit the Hofmann-Lawson space. By \cite{gierz2003continuous}, Exercise V-5.25, there exists a dense subset $A$ of $[0,1)$ such that $A\cap U$ is not a Borel set for every nonempty open subset $U$ of $[0,1]$. Goubault-Larrecq also elaborated on this in a 2019 blog post. We denote $I=[0,1]$ with the usual topology, let $J=[0,1)$ with the Scott topology under the reverse order and $Y=I\times J$ be the product space. Equip $E=[A\times(J\cap\mathbb{Q})]\cup[(I\setminus A)\times(J\setminus\mathbb{Q})]$ with the relative topology from $Y$, we call $E$ the Hofmann-Lawson space. In \cite{hofmann1978spectral}, Hofmann and Lawson proved that $E$ is a second-countable, core-compact space whose interior of every compact subset is empty, and thus is not a locally compact space. In the remainder of this paper, when we consider the Hofmann-Lawson space, we will use these symbols directly.

\section{Concerning two open problems on well-filteredness}
In this section, we will reveal that the Hofmann-Lawson space $E$ is not an open well-filtered space but its Smyth power space is an open well-filtered space. In addition, we show that $PF$-well-filteredness and $PF$-sobriety coincide on locally compact spaces, thereby answering the question raised in \cite{li2020unified} and give characterizations of  $PF$-well-filtered spaces and of $PF$-sober spaces, respectively. By means of these characterizations, we further extend this equivalence beyond local compactness, namely to the strictly larger class of the Rudin spaces.

\begin{lemma}
	Let $X$ be a topological space in which every compact saturated subset has empty interior. Then the Smyth power space $Q(X)$ is open well-filtered.
\end{lemma}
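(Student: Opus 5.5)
The plan is to show that the way-below relation on $\mathcal O(Q(X))$ is trivial: if $\mathcal U\ll\mathcal V$ in $\mathcal O(Q(X))$, then $\mathcal U=\emptyset$. Granting this, the lemma follows at once.

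Recall that $Q(X)$ is $T_0$, since its specialization order is reverse inclusion. Let $\{\mathcal U_i\}_{i\in I}$ be a nonempty $\ll$-filtered family in $\mathcal O(Q(X))$, and fix any index $i$. Filteredness with respect to $\ll$ gives some $k$ with $\mathcal U_k\ll\mathcal U_i$. Triviality of $\ll$ then forces $\mathcal U_k=\emptyset$. Hence any open set $\mathcal W$ containing $\bigcap_i\mathcal U_i$ already contains the member $\mathcal U_k$, which is exactly open well-filteredness. So the whole content lies in the triviality of $\ll$.

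To prove triviality, suppose $\mathcal U\ll\mathcal V$ and $K_0\in\mathcal U$, and aim for a contradiction.

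\textbf{Step 1 (reduce to a basic open).} By the definition of the upper Vietoris topology, there is an open $W\subseteq X$ with $K_0\in\Box W\subseteq\mathcal U$, where $\Box W=\{K\in Q(X)\mid K\subseteq W\}$. In particular $W\neq\emptyset$, and $\uparrow x\in\mathcal U$ for every $x\in W$.

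\textbf{Step 2 (choose a directed open cover of $\mathcal V$).} For each $K\in\mathcal V$ pick an open $W_K$ with $K\in\Box W_K\subseteq\mathcal V$. The finite unions $\bigcup_{K\in F}\Box W_K$, for finite $F\subseteq\mathcal V$, form a directed family of opens whose union is $\mathcal V$. Since $\mathcal U\ll\mathcal V$, we get
\[
\Box W\subseteq\mathcal U\subseteq\Box W_{K_1}\cup\dots\cup\Box W_{K_n}
\]
for some $K_1,\dots,K_n\in\mathcal V$.

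\textbf{Step 3 (derive a contradiction).} The goal is to exhibit some $K\in\Box W$ that lies in no $\Box W_{K_i}$, or else to conclude that $W$ lies inside a finite union of compact saturated sets. Such a finite union is itself compact saturated, and it would then have nonempty interior, contradicting the hypothesis on $X$.

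I expect Step 3 to be the main obstacle. With an arbitrary choice of the $W_K$, the cover in Step 2 gives no grip on $W$: singletons $\uparrow x$ with $x\in W$ land in some $\Box W_{K_i}$ without producing any compact set around $W$. So the cover must be chosen more cleverly, so that each of its members is contained in a set of the form $\{K'\mid K'\cap L\neq\emptyset\}$ or $\{K'\mid K'\subseteq W'\}$ with $W'$ controlled by a compact saturated $L$.

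My first attempt will use the hypothesis directly. Every nonempty open subset of $X$ fails to be contained in any compact saturated set. So for each finite family $K_1,\dots,K_n$ one can choose $x\in W\setminus(K_1\cup\dots\cup K_n)$. I will try to build the directed cover so that membership of $\uparrow x$ in a finite stage forces $x\in K_1\cup\dots\cup K_n$. Concretely, I would index the cover by finite subsets of $\mathcal V$ and shrink each $\Box W_K$ using the compact sets $K$ themselves, so that this choice of $x$ gives $\uparrow x\in\mathcal U$ lying outside the finite subcover.
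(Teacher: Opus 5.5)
Your outer reduction is sound and agrees with the paper. Step 1 is the same, and open well-filteredness does follow from triviality of $\ll$ on $\mathcal O(Q(X))$ exactly as you argue.

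The gap is that this triviality, which is the entire content, is never established. Step 3 remains a plan, and the plan cannot work. You want a directed cover of $\mathcal V$ whose stage $\mathcal S$ indexed by $\{K_1,\dots,K_n\}$ satisfies: if $x\in W$ and $\uparrow x\in\mathcal S$, then $x\in K_1\cup\dots\cup K_n$. But $x\mapsto\uparrow x$ is continuous from $X$ to $Q(X)$, since the preimage of $\Box O$ is $O$. So $W\cap\{x\mid \uparrow x\in\mathcal S\}$ is open in $X$. Your property places this open set inside the compact saturated set $K_1\cup\dots\cup K_n$, and the hypothesis then makes it empty. Hence no stage could contain any $\uparrow x$ with $x\in W$, although the stages must cover $\mathcal V\supseteq\mathcal U\supseteq\Box W$.

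In fact no principal filter $\uparrow x$ can serve as the escaping element in general. Take the Hofmann--Lawson space $E$, which is core-compact and satisfies the hypothesis. Pick nonempty opens $W'\ll W$ in $\mathcal O(E)$, and let $\mathcal U=\Box W'$, $\mathcal V=\Box W$. Any directed open cover of $\Box W$ pulls back along $x\mapsto\uparrow x$ to a directed open cover of $W$. So one of its members contains $\uparrow x$ for every $x\in W'$, even though $\Box W'\not\ll\Box W$ by the argument below.

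The missing idea is a particular cover together with a finite, non-principal witness. Let $\Phi_W=\{\Box O\mid O\in\mathcal O(X),\ W\nsubseteq O\}$.
\begin{itemize}
\item It covers $Q(X)$. For $K\in Q(X)$, the hypothesis gives $w\in W\setminus K$, since $W$ is nonempty and open. As $K$ is saturated, there is an open $O\supseteq K$ with $w\notin O$, so $K\in\Box O\in\Phi_W$.
\item No finite subfamily $\Box O_1,\dots,\Box O_m$ covers $\Box W$. Choose $w_j\in W\setminus O_j$ and set $L=\uparrow\{w_1,\dots,w_m\}$. Then $L\subseteq W$, so $L\in\Box W$, while $w_j\in L\setminus O_j$ gives $L\notin\Box O_j$ for every $j$.
\end{itemize}
This works even when $O_1\cup\dots\cup O_m\supseteq W$, because $\Box O_1\cup\dots\cup\Box O_m$ is in general far smaller than $\Box(O_1\cup\dots\cup O_m)$. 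Spending one point on each member is what defeats the finite subfamily.

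Finally, $\mathcal U\ll\mathcal V$ implies $\mathcal U\ll Q(X)$. Then some finite subfamily of $\Phi_W$ would have to cover $\mathcal U\supseteq\Box W$, so $\mathcal U=\emptyset$. This is the paper's proof.
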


\begin{proof}
	We show that in the Smyth power space $Q(X)$, for open sets $\mathcal{U}$ and $\mathcal{V}$,$\mathcal{U}\ll\mathcal{V}$ if and only if $\mathcal{U}=\emptyset$.
	
	Let $\mathcal{U}$ be a nonempty open set. Then there exists an open set $W\neq\emptyset$ such that $\emptyset\neq\Box W\subseteq\mathcal{U}$. Consider $\Phi_W=\{\Box O\mid\ O\in\mathcal{O}(X), W\nsubseteq O\}$.
	
	Claim 1: $\Phi_W$ is an open cover of $Q(X)$.
	
	Assume that $K\in Q(X)$, then $W\nsubseteq K$ because $K$ has an empty interior. Take $w\in W\setminus K$. Since $K$ is saturated, we can choose an open set $O$ such that $K\subseteq O$ and $w\notin O$. Thus $K\in\Box O\in\Phi_W$.
	
	Claim 2: There is no finite subfamily of $\Phi_W$ that can cover $\Box W$.
	
	Given $\{\Box O_1,...,\Box O_m\}$ is a finite subfamily of $\Phi_W$. Then for each $j\in\{1,...,m\}$, take $w_j\in W\setminus O_j$. Let $L=\uparrow_X\{w_1,...,w_m\}\in Q(X)$. Then $L\subseteq W$ because $W$ is an open set. As a result, $L\in\Box W$. But $L\notin\bigcup_{j=1}^m\Box O_j$ due to the fact that $w_j\in L\setminus O_j$.
	
	To conclude, $\mathcal{U}\nll Q(X)$ and then $\mathcal{U}\ll\mathcal{V}$ if and only if $\mathcal{U}=\emptyset$, which is sufficient to show that $Q(X)$ is open well-filtered.
\end{proof}

\begin{theorem}
	The Hofmann-Lawson space $E$ is not an open well-filtered space but its Smyth power space is open well-filtered.
\end{theorem}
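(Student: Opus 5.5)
The statement has two halves, and I would treat them separately. The second half is essentially immediate from the preceding Lemma. Hofmann and Lawson showed that every compact subset of $E$ has empty interior. If $K\subseteq E$ is compact saturated, then $K$ is in particular compact, so $\mathrm{int}(K)=\emptyset$. The Lemma then applies directly and gives that $Q(E)$ is open well-filtered.

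For the first half I would use the result of Shen, Xi, Xu and Zhao quoted in the introduction: every core-compact open well-filtered space is sober. Since $E$ is core-compact, it suffices to show that $E$ is not sober. The plan is to exhibit an irreducible closed set of $E$ that is not the closure of a point. Suppose instead that $E$ were sober. Then $E$ would be a core-compact sober space, and such spaces are locally compact (a standard fact, e.g.\ in \cite{gierz2003continuous}). In a nonempty locally compact space, every point has a compact neighbourhood, so some compact saturated set has nonempty interior. This contradicts the Hofmann--Lawson property that every compact subset of $E$ has empty interior. Hence $E$ is not sober. By the theorem of Shen, Xi, Xu and Zhao it follows that $E$ is not open well-filtered.

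The only step needing care is the implication ``core-compact and sober $\Rightarrow$ locally compact''. I would cite it from \cite{gierz2003continuous}; it is the Hofmann--Lawson duality, via the Hofmann--Mislove theorem. As an alternative to citing it, I could produce a concrete non-sober witness, for instance the closure of $\{(a,q)\in E\mid q \text{ close to } 0\}$ for a fixed first coordinate. But that route requires checking the specialization order of $E$ directly, so the citation is cleaner. I would also check that $E$ is nonempty, which it clearly is, so that the interior argument is not vacuous.
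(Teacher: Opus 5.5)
Your proof is correct. For $Q(E)$ it is the paper's argument: Lemma 3.1 together with the Hofmann--Lawson fact that compact subsets of $E$ have empty interior.

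For the failure of open well-filteredness of $E$, you take a genuinely different, indirect route. Since $E$ is core-compact, open well-filteredness would make it sober by the Shen--Xi--Xu--Zhao theorem. A sober core-compact space is locally compact, which $E$ is not.

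The paper argues directly instead. It first proves that $W\mapsto W\cap E$ is a lattice isomorphism $\mathcal{O}(Y)\cong\mathcal{O}(E)$ for $Y=I\times J$. It uses this to transport the way-below relation from the locally compact space $Y$, obtaining the $\ll$-chain $U_n=E\cap\bigl((1-\frac1n,1]\times[0,\frac12+\frac1n)\bigr)$. It then checks that $\bigcap_n U_n=\{1\}\times([0,\frac12)\setminus\mathbb{Q})$ lies in $G=E\cap(I\times[0,\frac12))$, while no $U_n$ does. The obstruction is the point $(1,\frac12)$ of $Y$, which is missing from $E$ because $1\notin A$.

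Your argument is shorter and yields more: no core-compact, non-locally compact space is open well-filtered. But it rests on two nontrivial external theorems, of which the Shen--Xi--Xu--Zhao result is the deep one. The paper's argument is self-contained and produces an explicit witnessing family. Moreover, the isomorphism $\mathcal{O}(Y)\cong\mathcal{O}(E)$ it proves is precisely what underlies the core-compactness of $E$ that you cite.

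If you want to drop the second citation, your sketched witness works only for a first coordinate $a\in I\setminus A$. For $a\in A$ the point $(a,0)$ lies in $E$ and is a generic point of that closure. For a correct witness, take $E\cap(\{1\}\times J)=\{1\}\times(J\setminus\mathbb{Q})$. It is closed and directed in the specialization order (which on $J$ is the reverse of the usual order), hence irreducible. It has no generic point, since the only candidate $(1,0)$ is not in $E$.
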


\begin{proof}
By Lemma 3.1, $Q(E)$ is open well-filtered. Now we prove $E$ is not open well-filtered.

For $n\ge 3$, let $N_n=(1-\frac{1}{n}, 1]$ and $b_n=\frac{1}{2}+\frac{1}{n}$. Define $H_n=N_n\times[0,b_n)$, $U_n=E\cap H_n$ and $G=E\cap(I\times[0,\frac{1}{2}))$. Clearly, $U_n$ and $G$ are open sets in $E$ for each $n$.

In order to prove that $\{U_n\}_{n\ge 3}$ is a $\ll$-filtered family, we define the map $\rho: \mathcal{O}(Y)\to \mathcal{O}(E)$, which sends each $G$ to the intersection $G\cap E$.

Claim 1: $\rho$ is an order-isomorphism between two open set lattices.

We only need to show that for $G, H\in\mathcal{O}(Y)$, $G\cap E\subseteq H\cap E$ implies $G\subseteq H$. Suppose $G\nsubseteq H$, take $(x,t)\in G\setminus H$. Consider $G_x=\{s\in J\mid (x,s)\in G\}$ and $H_x=\{s\in J\mid (x,s)\in H\}$. Note that the map $f$ from $J$ to $I\times J$ which sends each $s$ to $(x,s)$ is continuous and $G_x=f^{-1}(G), H_x=f^{-1}(H)$. Thus they are open sets in $J$ and we can take $g,h\in I$ such that $G_x=[0,g)$ and $H_x=[0,h)$. Since $(x,t)\in G\setminus H, t\in G_x$ but $t\notin H_x$, that is, $h\le t<g$. Therefore, $h<g$. Now consider $x$: If $x\in A$, take $s\in (h,g)\cap\mathbb{Q}$; If $x\in I\setminus A$, take $s\in (h,g)\cap(\mathbb{R}\setminus\mathbb{Q})$. No matter what the situation is, we can guarantee that $(x,s)\in E$. But $h<s<g$ implies $s\in G_x\setminus H_x$, whence $(x,s)\in G\setminus H$. To conclude, $G\cap E\nsubseteq H\cap E$, a contradiction.

Claim 2: $H_{n+1}\ll H_n$ for each $n\ge 3$.

This is obvious because $H_{n+1}\subseteq [1-\frac{1}{n+1}, 1]\times [0,b_{n+1}]\subseteq H_n$ and $[1-\frac{1}{n+1}, 1]\times [0,b_{n+1}]$ is compact in $Y$.

Note that $U_n=E\cap H_n$, then $U_{n+1}\ll U_n$ for each $n\ge 3$ by Claims 1 and 2, which implies $\{U_n\}_{n\ge 3}$ is a $\ll$-filtered family.

It is evident that $\bigcap_{n\ge 3} U_n=E\cap(\{1\}\times[0,\frac{1}{2}])=\{1\}\times([0,\frac{1}{2})\setminus\mathbb{Q})\subseteq G$, but $U_n\nsubseteq G$ for each $n$ because $A$ is dense and the "height" of $U_n$ is strictly bigger than $\frac{1}{2}$.
\end{proof}

\begin{lemma}
	Let $X$ be a locally compact space, $C$ a nonempty compact subset of $X$, and $U$ an open set with $C\subseteq U$. Then there exists $L\in Q(X)$ such that $C\subseteq int_X L\subseteq L\subseteq U$.
\end{lemma}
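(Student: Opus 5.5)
The plan is a standard finite-subcover argument that uses local compactness pointwise. I take local compactness in the usual sense for non-Hausdorff spaces: every point has a neighbourhood base of compact saturated sets. Equivalently, whenever $x\in V$ with $V$ open, there is a compact saturated $K$ with $x\in int_X K\subseteq K\subseteq V$. (If only compact neighbourhoods are given, replace each by its saturation $\uparrow K$. This is still compact, and it stays inside $V$ because open sets are upper sets in the specialization order.)

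First, for each $c\in C$ we have $c\in U$. So local compactness gives a compact saturated set $K_c$ with $c\in int_X K_c\subseteq K_c\subseteq U$. The family $\{int_X K_c\mid c\in C\}$ is an open cover of $C$. Since $C$ is compact, there are finitely many points $c_1,\dots,c_n$ (with $n\ge 1$ because $C\neq\emptyset$) such that $C\subseteq \bigcup_{i=1}^n int_X K_{c_i}$.

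Next, set $L=\bigcup_{i=1}^n K_{c_i}$. A finite union of compact sets is compact. A union of saturated (upper) sets is saturated, since in a $T_0$ space the saturated sets are exactly the upper sets of the specialization order. Also $L\neq\emptyset$, so $L\in Q(X)$. Each $K_{c_i}\subseteq U$, so $L\subseteq U$. Finally, $\bigcup_i int_X K_{c_i}$ is an open subset of $L$, hence it lies in $int_X L$, and therefore $C\subseteq int_X L$.

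I do not expect a real obstacle. The only point needing care is the definition of local compactness: the neighbourhoods must be chosen saturated, or be saturated as above, so that $L$ is saturated and so that the inclusion $L\subseteq U$ is preserved.
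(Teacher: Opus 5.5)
Your proposal is correct and is essentially the paper's proof: choose compact saturated neighbourhoods $K_c \subseteq U$ of each $c \in C$, extract a finite subcover of $C$ by their interiors, and take $L = \bigcup_{i=1}^n K_{c_i}$. Your remark about saturating compact neighbourhoods (replacing $K$ by $\uparrow K$) is a detail the paper leaves implicit, and it is handled correctly.
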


\begin{proof}
	For every $c\in C$, since $X$ is locally compact, there exists $H_c\in Q(X)$ such that $c\in int_X H_c\subseteq H_c\subseteq U$. Then $\{int_X H_c\}_{c\in C}$ is an open cover of $C$ and $C\subseteq\bigcup_{i=1}^n int_X H_{c_i}$ because $C$ is compact. Let $L=\bigcup_{i=1}^n H_{c_i}\in Q(X)$ and it is easy to see $C\subseteq int_X L\subseteq L\subseteq U$.
\end{proof}

\begin{theorem}
	A locally compact space $X$ is $PF$-well-filtered if and only if it is $PF$-sober.
\end{theorem}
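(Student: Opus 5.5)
The forward implication ($PF$-sober $\Rightarrow$ $PF$-well-filtered) is already known from \cite{li2020unified}. It holds because a filtered family in $Q(X)$ is directed in the specialization order of $Q(X)$, which is reverse inclusion, hence irreducible there. So the plan concentrates on the converse. Assume $X$ is locally compact and $PF$-well-filtered. Let $\mathcal{A}\subseteq Q(X)$ be irreducible with $\bigcap\mathcal{A}=\uparrow x\subseteq U$ for some open $U$. The idea is to replace $\mathcal{A}$ by a \emph{filtered} family $\mathcal{F}\subseteq Q(X)$ with the same intersection $\uparrow x$, whose members are sandwiched around members of $\mathcal{A}$. Lemma 3.3 provides this sandwiching.

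First I will record what irreducibility gives. Let $\mathcal{V}=\{V\in\mathcal{O}(X)\mid \exists K\in\mathcal{A},\ K\subseteq V\}$, i.e. the opens $V$ with $\mathcal{A}\cap\Box V\neq\emptyset$. Suppose $V_1,V_2\in\mathcal{V}$. Then $\mathcal{A}$ meets both basic opens $\Box V_1$ and $\Box V_2$ of the upper Vietoris topology. Irreducibility of $\mathcal{A}$ then says $\mathcal{A}$ meets $\Box V_1\cap\Box V_2=\Box(V_1\cap V_2)$. Hence $\mathcal{V}$ is closed under finite intersections, and it is clearly an upper set.

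Next I will define
\[
\mathcal{F}=\{M\in Q(X)\mid \exists K\in\mathcal{A},\ K\subseteq int_X M\}.
\]
It is nonempty: apply Lemma 3.3 to any $K\in\mathcal{A}$ (nonempty compact) and $V=X$. It is filtered: given $M_1,M_2\in\mathcal{F}$, the sets $int_X M_1$ and $int_X M_2$ both lie in $\mathcal{V}$. By the previous step there is $K\in\mathcal{A}$ with $K\subseteq int_X M_1\cap int_X M_2$. Lemma 3.3 applied to $C=K$ and this open set yields $M_3\in Q(X)$ with $K\subseteq int_X M_3\subseteq M_3\subseteq M_1\cap M_2$. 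Thus $M_3\in\mathcal{F}$ lies below both.

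Then I will check $\bigcap\mathcal{F}=\uparrow x$. For "$\supseteq$": each $M\in\mathcal{F}$ contains some $K\in\mathcal{A}$, and $K\supseteq\bigcap\mathcal{A}=\uparrow x$. For "$\subseteq$": fix $K\in\mathcal{A}$ and any open $V\supseteq K$. Lemma 3.3 gives $M\in\mathcal{F}$ with $M\subseteq V$, so $\bigcap\mathcal{F}\subseteq V$. Since $K$ is saturated, it is the intersection of such $V$, so $\bigcap\mathcal{F}\subseteq K$ for every $K\in\mathcal{A}$, giving $\bigcap\mathcal{F}\subseteq\uparrow x$.

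Finally, $PF$-well-filteredness applied to $\mathcal{F}$ and $U$ yields $M\in\mathcal{F}$ with $M\subseteq U$. Choosing $K\in\mathcal{A}$ with $K\subseteq int_X M\subseteq U$ finishes the proof. The only delicate point is the filteredness of $\mathcal{F}$. An intersection $M_1\cap M_2$ of compact saturated sets need not be compact in a non-well-filtered space, so one cannot simply intersect. The argument avoids this by using irreducibility of $\mathcal{A}$ to find a member inside $int_X M_1\cap int_X M_2$, and then local compactness (Lemma 3.3) to build a new compact saturated set inside that intersection.
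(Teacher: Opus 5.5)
Your proof is correct and follows the paper's argument essentially step for step: the same family $\mathcal{F}=\{M\in Q(X)\mid \exists K\in\mathcal{A},\ K\subseteq int_X M\}$, filteredness from irreducibility of $\mathcal{A}$ applied to $\Box(int_X M_1)$ and $\Box(int_X M_2)$ followed by Lemma 3.3, and the same saturation argument for $\bigcap\mathcal{F}=\bigcap\mathcal{A}$. One minor quibble with your closing remark: $M_1\cap M_2$ can fail to be compact even in locally compact sober spaces, so this is a coherence issue rather than a well-filteredness one, but it does not affect the proof.
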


\begin{proof}
	Every $PF$-sober space is $PF$-well-filtered.
	
	Conversely, given an irreducible set $\mathcal{A}\subseteq Q(X)$ and define $\mathcal{F}_{\mathcal{A}}=\{L\in Q(X) \mid \exists K\in\mathcal{A}, K\subseteq int_X L\}$.
	
	Since $\mathcal{A}\neq\emptyset$, take $K\in\mathcal{A}$. Then by Lemma 3.3, there exists $L\in Q(X)$ such that $K\subseteq int_X L\subseteq\ L\subseteq X$. So $L\in\mathcal{F}_{\mathcal{A}}$, which implies $\mathcal{F}_{\mathcal{A}}\neq\emptyset$.
	
	Given $L_1, L_2\in\mathcal{F}_{\mathcal{A}}$, then $\mathcal{A}\cap\Box(int_X L_i)\neq\emptyset$ for $i=1, 2$. Due to the irreducibility of $\mathcal{A}$, we have $\mathcal{A}\cap\Box(int_X L_1)\cap\Box(int_X L_2)\neq\emptyset$. Take $K_3$ in this intersection and by Lemma 3.3, there exists $L_3\in Q(X)$ such that $K_3\subseteq int_X L_3\subseteq L_3\subseteq int_X L_1\cap int_X L_2$. Thus $L_3\in\mathcal{F}_{\mathcal{A}}$ with $L_3\subseteq L_1\cap L_2$. Therefore, $\mathcal{F}_{\mathcal{A}}$ is filtered.
	
	Claim: $\bigcap\mathcal{F}_{\mathcal{A}}=\bigcap\mathcal{A}$.
	
	Assume $z\notin\bigcap\mathcal{A}$, then there exists $K\in\mathcal{A}$ such that $z\notin K$. Because $K$ is saturated, $z\notin G$ for some $G\in\mathcal{O}(X)$ and $K\subseteq G$. By Lemma 3.3, there exists $L\in Q(X)$ such that $K\subseteq int_X L\subseteq L\subseteq G$. Hence, $L\in\mathcal{F}_{\mathcal{A}}$ and $z\notin L$, which implies $z\notin\bigcap\mathcal{F}_{\mathcal{A}}$. To conclude, we have $\bigcap\mathcal{F}_{\mathcal{A}}\subseteq\bigcap\mathcal{A}$. The reverse inclusion is obvious.
	
	Therefore, if $\bigcap\mathcal{A}=\uparrow x\subseteq U\in\mathcal{O}(X)$, then $\bigcap\mathcal{F}_{\mathcal{A}}=\uparrow x\subseteq U$. Since $X$ is $PF$-well-filtered, there exists $L\in\mathcal{F}_{\mathcal{A}}$ satisfying $L\subseteq U$. Thus there exists $K\in\mathcal{A}$ such that $K\subseteq int_X L\subseteq L\subseteq U$. 
\end{proof}

Now we will provide characterizations of $PF$-well-filtered spaces and of $PF$-sober spaces separately.

\begin{lemma} (Topological Rudin's Lemma)\cite{heckmann2013quasicontinuous}
	Let $X$ be a topological space and $\mathcal{A}$ an irreducible subset of the
	Smyth power space $Q(X)$. Then every closed set $F\subseteq X$ that meets all members of $\mathcal{A}$ contains a minimal irreducible closed subset $C$ that still meets all members of $\mathcal{A}$.
\end{lemma}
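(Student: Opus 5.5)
We will prove the Topological Rudin's Lemma by Zorn's Lemma applied to a suitable family of closed sets, and then show that a minimal element is automatically irreducible. Let
\[
\mathcal{M}=\{B\in\Gamma(X)\mid B\subseteq F \text{ and } B\cap K\neq\emptyset \text{ for all } K\in\mathcal{A}\},
\]
ordered by reverse inclusion. Since $F\in\mathcal{M}$, the family is nonempty. The goal is a minimal member $C$ of $\mathcal{M}$, i.e.\ a maximal element in the reverse order, which we then show is irreducible. Minimality among all closed sets meeting every member of $\mathcal{A}$ then gives minimality among irreducible closed ones.

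First, every chain $\{B_i\}_{i\in I}$ in $\mathcal{M}$ has a lower bound, namely $B=\bigcap_i B_i$. This set is closed and contained in $F$; the issue is that it still meets each $K\in\mathcal{A}$. Fix $K$. The sets $B_i\cap K$ form a chain of nonempty closed subsets of the compact set $K$ in the relative topology, so they have the finite intersection property. Compactness of $K$ then gives $\bigcap_i (B_i\cap K)\neq\emptyset$. Hence $B\in\mathcal{M}$, and Zorn's Lemma yields a minimal $C\in\mathcal{M}$. Compactness is used here, and only for the $K$'s; saturation is not needed.

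The main step is irreducibility of $C$. The set $C$ is nonempty since $\mathcal{A}\neq\emptyset$ (irreducible sets are nonempty) and $C$ meets members of $\mathcal{A}$. Suppose $C\subseteq F_1\cup F_2$ with $F_1,F_2$ closed, but $C\nsubseteq F_1$ and $C\nsubseteq F_2$. Then $C\cap F_1$ and $C\cap F_2$ are closed proper subsets of $C$. By minimality, neither lies in $\mathcal{M}$, so there are $K_1,K_2\in\mathcal{A}$ with $K_1\cap C\cap F_1=\emptyset$ and $K_2\cap C\cap F_2=\emptyset$. Thus $K_j\subseteq X\setminus(C\cap F_j)$, i.e.\ $K_j\in\Box(X\setminus(C\cap F_j))$. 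These are open sets of the upper Vietoris topology meeting $\mathcal{A}$. Irreducibility of $\mathcal{A}$ (as a subset of $Q(X)$) means any two open sets meeting $\mathcal{A}$ meet it in common. So some $K_3\in\mathcal{A}$ lies in both, that is, $K_3\cap C\cap F_1=\emptyset=K_3\cap C\cap F_2$. Since $C\subseteq F_1\cup F_2$, this gives $K_3\cap C=\emptyset$, contradicting $C\in\mathcal{M}$.

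The delicate point is exactly this translation between irreducibility of $\mathcal{A}$ in $Q(X)$ and the pairwise condition on the sets $\Box(X\setminus(C\cap F_j))$. The standard equivalence applies: a set is irreducible iff every two open sets meeting it have intersection meeting it. Note also that $\Box U\cap\Box V=\Box(U\cap V)$, though we only need the intersection. With this, $C$ is an irreducible closed subset of $F$ meeting all members of $\mathcal{A}$ and minimal with that property, as required.
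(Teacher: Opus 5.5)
Your proof is correct, and it is essentially the standard argument from the cited Heckmann--Keimel paper: Zorn's lemma on the closed subsets of $F$ meeting every member of $\mathcal{A}$, with compactness of each $K\in\mathcal{A}$ handling chains, and irreducibility of $\mathcal{A}$ applied to $\Box(X\setminus(C\cap F_1))$ and $\Box(X\setminus(C\cap F_2))$ showing that the minimal element is irreducible. The paper gives no proof of its own and only cites that source, so there is no other proof to compare yours against.
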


\begin{proposition}
	Let $X$ be a $T_0$ space. Then $X$ is $PF$-well-filtered if and only if for each $C\in RD(X)$, $C=C^{ul}$.
\end{proposition}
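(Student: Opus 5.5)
The plan is to prove both directions by passing from a filtered family $\mathcal{K}\subseteq Q(X)$ to the filtered family obtained by intersecting its members with a closed set $C$. Throughout, $C^{u}$ and $C^{ul}$ are taken in the specialization order. Since $C$ is a lower set, $C\subseteq C^{ul}$ always holds (with the convention that $C^{ul}=X$ when $C^{u}=\emptyset$).

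\textbf{Key observation.} Let $C$ be closed and $\mathcal{K}\subseteq Q(X)$ filtered, such that $C$ is a minimal closed set meeting every member of $\mathcal{K}$. Put $\mathcal{K}_C=\{\uparrow(C\cap K)\mid K\in\mathcal{K}\}$.
\begin{itemize}
\item Each $C\cap K$ is a nonempty compact set, being a closed subset of a compact set, so $\uparrow(C\cap K)\in Q(X)$.
\item $\mathcal{K}_C$ is filtered, because $\mathcal{K}$ is.
\item We claim $\bigcap\mathcal{K}_C=C^{u}$. If $z\in C^{u}$, then $z$ lies above every point of $C\cap K$, so $z\in\uparrow(C\cap K)$ for every $K$. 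Conversely, if $z\in\bigcap\mathcal{K}_C$, then $\downarrow z\cap C$ is a closed subset of $C$ meeting every $K\in\mathcal{K}$. By minimality it equals $C$, so $z\in C^{u}$.
\end{itemize}

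\textbf{($\Rightarrow$).} Let $C\in RD(X)$, witnessed by a filtered $\mathcal{K}$, and suppose some $y\in C^{ul}\setminus C$ exists.
\begin{itemize}
\item Consider the family $\{\uparrow(C\cap K)\cup\uparrow y\mid K\in\mathcal{K}\}$. It is a filtered subfamily of $Q(X)$.
\item By the key observation its intersection is $C^{u}\cup\uparrow y$.
\item Since $y$ is a lower bound of $C^{u}$, we have $C^{u}\subseteq\uparrow y$. Hence the intersection is exactly $\uparrow y$. This holds even when $C^{u}=\emptyset$, which is why adjoining $\uparrow y$ is needed.
\item The set $X\setminus C$ is open, and it contains $\uparrow y$ because $C$ is a lower set and $y\notin C$.
\item By $PF$-well-filteredness some member $\uparrow(C\cap K)\cup\uparrow y$ lies inside $X\setminus C$. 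This is impossible, since $C\cap K\neq\emptyset$ is contained in $C$.
\end{itemize}
Hence $C^{ul}\subseteq C$, so $C=C^{ul}$.

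\textbf{($\Leftarrow$).} Let $\mathcal{K}\subseteq Q(X)$ be filtered with $\bigcap\mathcal{K}=\uparrow x\subseteq U$, and suppose no member of $\mathcal{K}$ lies in $U$.
\begin{itemize}
\item Then $X\setminus U$ is a closed set meeting every member of $\mathcal{K}$.
\item A filtered family is irreducible in $Q(X)$. So Lemma 3.5 (Topological Rudin's Lemma) gives a minimal irreducible closed $C\subseteq X\setminus U$ meeting every member of $\mathcal{K}$. Minimality among irreducible closed sets implies minimality among all closed sets meeting $\mathcal{K}$: any smaller closed set meeting $\mathcal{K}$ would, by the same lemma, contain an irreducible one. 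Thus $C\in RD(X)$.
\item By the key observation, $C^{u}=\bigcap\mathcal{K}_C\subseteq\bigcap\mathcal{K}=\uparrow x$. So $x$ is a lower bound of $C^{u}$, i.e. $x\in C^{ul}$ (trivially so if $C^{u}=\emptyset$).
\item By hypothesis $C=C^{ul}$, so $x\in C\subseteq X\setminus U$. This contradicts $x\in U$.
\end{itemize}

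\textbf{Main obstacle.} The delicate point is the forward direction when $C^{u}$ is empty or not principal. $PF$-well-filteredness only applies to principal-filter intersections, so the direct family $\mathcal{K}_C$ cannot be used there. The trick of adjoining $\uparrow y$ to each member is what makes the intersection exactly $\uparrow y$.
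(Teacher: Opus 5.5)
Your proof is correct and follows essentially the same route as the paper: the same auxiliary family $\uparrow(C\cap K)$ with intersection $C^{u}$ (via minimality of $C$), the same device of adjoining $\uparrow y$ in the forward direction so the intersection is exactly $\uparrow y$, and the Topological Rudin Lemma in the converse. You also spell out two points the paper leaves implicit: that a filtered family is irreducible in $Q(X)$, and that the set produced by Lemma 3.5 is minimal among all closed sets meeting $\mathcal{K}$, hence a Rudin set.
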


\begin{proof}
	$(\Rightarrow)$ For any $C\in RD(X)$. Suppose $C^{ul}\nsubseteq C$, take $x\in C^{ul}\setminus C$. Let $U=X\setminus C$, then $U\in\mathcal{O}(X)$ with $x\in U$ and $C^u\subseteq\uparrow x$. Since $C$ is a Rudin set, take $\mathcal{K}$, which witnesses this. For each $K\in\mathcal{K}$, define $L_K$=$\uparrow(C\cap K)$ and $\mathcal{L}=\{L_K\mid K\in\mathcal{K}\}$. It is easy to see that $\mathcal{L}\subseteq Q(X)$ is filtered.
	
	Claim: $\bigcap_{K\in\mathcal{K}} L_K=C^{u}$.
	
	Assume $y\in\bigcap_{K\in\mathcal{K}} L_K$, then for each $K\in\mathcal{K}$, $y\ge c_K$ for some $c_K\in C\cap K$. Consider $B=C\cap\downarrow y$. Obviously, $B$ is closed and $B\subseteq C$ with $c_K\in B\cap K\neq\emptyset$. Thus $C\subseteq\downarrow y$ by the minimality of $C$, which implies $y\in C^u$. The reverse inclusion is obvious.
	
	Now define $\mathcal{M}=\{M_K\mid K\in\mathcal{K}\}$, where $M_K=L_K\cup\uparrow x$. It is evident that $\mathcal{M}\subseteq Q(X)$ is filtered and $\bigcap_{K\in\mathcal{K}} M_K=C^u\cup\uparrow x=\uparrow x\subseteq U$. So $M_{K_0}\subseteq U=X\setminus C$ for some $K_0\in\mathcal{K}$ by the $PF$-well-filteredness of $X$. But $\emptyset\neq C\cap K_0\subseteq L_{K_0}\subseteq M_{K_0}$, which implies $C\cap M_{K_0}\neq\emptyset$, a contradiction.
	
	Note that when $C^u=\emptyset$, we can also show that $\bigcap_{K\in\mathcal{K}} L_K=\emptyset$ and the conclusion still holds.
	
	$(\Leftarrow)$ Let $\mathcal{K}\subseteq Q(X)$ be filtered and $\bigcap\mathcal{K}=\uparrow x\subseteq U\in\mathcal{O}(X)$. Suppose for each $K\in\mathcal{K}$, $K\nsubseteq U$. Set $F=X\setminus U$, it is closed. Thus $K\cap F\neq\emptyset$ implies that there exists set $C$ satisfying the condition of Lemma 3.5. Consequently, $C\in RD(X)$ and $C^u\subseteq\bigcap\mathcal{K}=\uparrow x$ due to the fact that $C\cap K\neq\emptyset$. As a result, $x\in C^{ul}=C$, which implies that $x\in C\subseteq F=X\setminus U$, a contradiction.
\end{proof}

Similarly, one obtains the following characterization:

\begin{proposition}
	Let $X$ be a $T_0$ space. Then $X$ is $PF$-sober if and only if for each $C\in Irr_C(X)$, $C=C^{ul}$.
\end{proposition}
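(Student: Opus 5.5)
The plan is to mirror the proof of Proposition 3.6, with irreducible subsets $\mathcal{A}\subseteq Q(X)$ in place of filtered families and irreducible closed sets in place of Rudin sets. Two facts will be used throughout. First, $C\subseteq C^{ul}$ always holds. Second, for closed $C$ we have $C^{ul}=\bigcap_{y\in C^u}\downarrow y$, so only the inclusion $C^{ul}\subseteq C$ needs to be proved.

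$(\Rightarrow)$ Let $C\in Irr_C(X)$ and suppose $x\in C^{ul}\setminus C$; put $U=X\setminus C$, so that $x\in U$ and $C^u\subseteq\uparrow x$. Consider
\[
\mathcal{A}=\{\uparrow c\cup\uparrow x\mid c\in C\}\subseteq Q(X).
\]
To see that $\mathcal{A}$ is irreducible, note that $c\mapsto\uparrow c$ is continuous from $X$ into $Q(X)$: the preimage of $\Box V$ is $V$, and the preimage of $\Diamond V$ is also $V$ for the full Vietoris topology. Hence $\{\uparrow c\mid c\in C\}$ is irreducible, being the image of an irreducible set. Taking the union with the fixed set $\uparrow x$ is continuous as well, since $K\cup\uparrow x\in\Box V$ iff $K\in\Box V$ and $x\in V$. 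Therefore $\mathcal{A}$ is irreducible. Its intersection is
\[
\bigcap\mathcal{A}=\Big(\bigcap_{c\in C}\uparrow c\Big)\cup\uparrow x=C^u\cup\uparrow x=\uparrow x\subseteq U.
\]
$PF$-sobriety then yields some $c\in C$ with $\uparrow c\cup\uparrow x\subseteq U=X\setminus C$. This forces $c\notin C$, a contradiction. The degenerate case $C^u=\emptyset$ is covered by the same computation.

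$(\Leftarrow)$ Let $\mathcal{A}\subseteq Q(X)$ be irreducible with $\bigcap\mathcal{A}=\uparrow x\subseteq U\in\mathcal{O}(X)$, and suppose no member of $\mathcal{A}$ is contained in $U$. Then $F=X\setminus U$ is a closed set meeting every $K\in\mathcal{A}$. The Topological Rudin's Lemma 3.5 gives an irreducible closed $C\subseteq F$ that still meets every member of $\mathcal{A}$. Each $K\in\mathcal{A}$ is saturated and meets $C$, so every $y\in C^u$ lies above a point of $C\cap K$ and hence in $K$. Thus $C^u\subseteq\bigcap\mathcal{A}=\uparrow x$, and consequently $x\in C^{ul}=C\subseteq X\setminus U$, contradicting $x\in U$.

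The main obstacle is the irreducibility of $\mathcal{A}$ in the forward direction. The cleanest route is the continuity argument above; if needed, it can be checked directly on basic opens $\Box V$ using the irreducibility of $C$. Everything else transcribes Proposition 3.6 directly.
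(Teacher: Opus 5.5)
Your proof is correct and is the adaptation of Proposition 3.6 that the paper's one-word justification (``Similarly'') points to: the family $\{\uparrow c\cup\uparrow x\mid c\in C\}$, irreducible as a continuous image of $C$, replaces $\{M_K\mid K\in\mathcal{K}\}$, and the converse uses the Topological Rudin's Lemma exactly as in the paper. One harmless slip: the preimage of $\{K\in Q(X)\mid K\cap V\neq\emptyset\}$ under $c\mapsto\uparrow c$ is $\downarrow V$, not $V$, so drop that aside; it is irrelevant anyway, since $Q(X)$ carries the upper Vietoris topology and only the sets $\Box V$ matter.
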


\begin{theorem}
	Let $X$ be a Rudin space. Then $X$ is $PF$-well-filtered if and only if $X$ is $PF$-sober.
\end{theorem}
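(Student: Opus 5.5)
The plan is to combine the two characterizations just established, Proposition 3.6 and Proposition 3.7, with the defining equality $Irr_C(X)=RD(X)$ of a Rudin space. Since the paper has already shown that every $PF$-sober space is $PF$-well-filtered (this was the first line of the proof of Theorem 3.4 and holds for arbitrary $T_0$ spaces), only the converse direction needs work. Even so, I would give the argument symmetrically through the characterizations, since both directions come out at once.

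Let $X$ be a Rudin space that is $PF$-well-filtered. By Proposition 3.6, every $C\in RD(X)$ satisfies $C=C^{ul}$. Because $X$ is a Rudin space, $Irr_C(X)=RD(X)$. Hence every $C\in Irr_C(X)$ also satisfies $C=C^{ul}$, and Proposition 3.7 then gives that $X$ is $PF$-sober. For the other direction, run the same chain backwards: if $X$ is $PF$-sober, Proposition 3.7 gives $C=C^{ul}$ for all irreducible closed $C$. Therefore $C=C^{ul}$ holds for all Rudin sets, since they are the same family, and Proposition 3.6 yields $PF$-well-filteredness.

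No step here is a real obstacle, because all the content sits in the two characterizations. The only substantive point is that Proposition 3.7 was stated with ``similarly'' and no proof was given. If I had to justify it, I would mirror the proof of Proposition 3.6 in two halves:
\begin{itemize}
\item For the direction \emph{$PF$-sober implies $C=C^{ul}$}, take $C\in Irr_C(X)$ and use the irreducible family $\{\uparrow c\cup\uparrow x \mid c\in C\}\subseteq Q(X)$, where $x\in C^{ul}\setminus C$. This family is irreducible because $C$ is, and its intersection is $C^u\cup\uparrow x=\uparrow x\subseteq X\setminus C$. $PF$-sobriety then gives some $\uparrow c\cup\uparrow x\subseteq X\setminus C$, a contradiction.
\item For the converse, apply Lemma 3.5 to an irreducible $\mathcal{A}\subseteq Q(X)$ with $\bigcap\mathcal{A}=\uparrow x\subseteq U$, assuming every member meets $X\setminus U$. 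The lemma produces a minimal irreducible closed set $C\subseteq X\setminus U$ meeting all members of $\mathcal{A}$. Minimality gives $C^u\subseteq\bigcap\mathcal{A}=\uparrow x$, so $x\in C^{ul}=C$, a contradiction.
\end{itemize}
With that in hand, the theorem is immediate.
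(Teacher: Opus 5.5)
Your proof is correct and matches the paper's, which also derives Theorem 3.8 directly from Propositions 3.6 and 3.7 together with $Irr_C(X)=RD(X)$. Your sketch of Proposition 3.7 is also sound: the family $\{\uparrow c\cup\uparrow x\mid c\in C\}$ is irreducible because it is the image of $C$ under the continuous map $c\mapsto\uparrow c\cup\uparrow x$. One small correction there: $C^u\subseteq\bigcap\mathcal{A}$ holds simply because $C$ meets every member of $\mathcal{A}$ and those members are upper sets, so minimality is not needed for that step.
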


\begin{proof}
	It is clear by Propositions 3.6 and 3.7.
\end{proof}

\begin{remark}
	Since every locally compact $T_0$ space is a Rudin space\cite{xu2020t0}, the result that $PF$-well-filteredness and $PF$-sobriety coincide for locally compact spaces can extend to the strictly larger class of the Rudin spaces.
\end{remark}

\section{On function spaces related to $PF$-well-filtered spaces and $PF$-sober spaces}

Given topological spaces $X$ and $Y$, let $TOP(X, Y)$ be the set of all continuous maps from $X$ to $Y$. In this section, we consider the Isbell topology on $TOP(X,Y)$ and show that if the function space consisting of all continuous maps from nonempty $X$ to $Y$ endowed with the Isbell topology is $PF$-well-filtered ($PF$-sober), then $Y$ is also $PF$-well-filtered ($PF$-sober). We also give a counterexample to show that for nonempty $T_0$ topological spaces $X$ and $Y$, if $Y$ is a $PF$-well-filtered ($PF$-sober) space, then the function space comprising all continuous maps from $X$ to $Y$ equipped with the Isbell topology may not be a $PF$-well-filtered ($PF$-sober) space even when $X$ is a core-compact space.

\begin{definition}\cite{gierz2003continuous}
	For two spaces $X$ and $Y$, let $H$ be a Scott open set of $\mathcal{O}(X)$, $V$ be an open set of $Y$ and set $$N(H\leftarrow V)=\{f\in TOP(X,Y): f^{-1}(V)\in H\}.$$
	The Isbell topology on $TOP(X,Y)$ is generated by $\{N(H\leftarrow V)\mid H\in\sigma(\mathcal{O}(X)), V\in\mathcal{O}(Y)\}$ as a subbase and this function space is denoted by $[X,Y]$.
\end{definition}

\begin{lemma}\cite{liu2021function}
	Let $X$ and $Y$ be two $T_0$ spaces. Given $y\in Y$, consider the mapping $\xi \colon Y \to [X,Y], \ y \mapsto \xi_y$, where $\xi_y(x)=y$ for all $x\in X$. Then $\xi$ is continuous.
\end{lemma}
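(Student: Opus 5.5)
Since $\xi$ is a map into a space with a subbase, it suffices to check that the preimage of each subbasic open set $N(H\leftarrow V)$, with $H\in\sigma(\mathcal{O}(X))$ and $V\in\mathcal{O}(Y)$, is open in $Y$. The plan is to compute this preimage explicitly and find that it is always one of $\emptyset$, $V$ or $Y$.

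The key observation concerns preimages under a constant map. For $y\in Y$ and any open $V\subseteq Y$,
\[
\xi_y^{-1}(V)=\{x\in X\mid y\in V\},
\]
which equals $X$ if $y\in V$ and $\emptyset$ if $y\notin V$. Consequently
\[
\xi^{-1}(N(H\leftarrow V))=\{y\in Y\mid \xi_y^{-1}(V)\in H\}=\{y\in V\mid X\in H\}\cup\{y\in Y\setminus V\mid \emptyset\in H\}.
\]
We then split into cases according to whether $\emptyset\in H$ and whether $X\in H$. Because $H$ is Scott open, it is in particular an upper set in $\mathcal{O}(X)$. If $\emptyset\in H$, then $H=\mathcal{O}(X)$, so $X\in H$ as well and the preimage is $V\cup(Y\setminus V)=Y$. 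If $\emptyset\notin H$ and $X\in H$, the preimage is $V$. If $X\notin H$, then $H=\emptyset$ (again since $H$ is an upper set) and the preimage is $\emptyset$. In every case the preimage is open, so $\xi$ is continuous.

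We should also check that $\xi$ is well defined, that is, that each $\xi_y$ lies in $TOP(X,Y)$. This holds because constant maps are continuous: the preimage of any open set under $\xi_y$ is either $\emptyset$ or $X$. No step is a real obstacle here. The only point needing care is the use of the upper-set property of $H$ to rule out the mixed case in which $\emptyset\in H$ but $X\notin H$, since in that case the preimage would be $Y\setminus V$, which need not be open. The $T_0$ hypothesis is not needed for continuity.
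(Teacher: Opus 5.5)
Your proof is correct and complete. The paper gives no proof of its own and simply cites \cite{liu2021function}; your direct check that $\xi^{-1}(N(H\leftarrow V))$ is always $\emptyset$, $V$ or $Y$ is the natural verification, and your remarks that each $\xi_y$ lies in $TOP(X,Y)$ and that $T_0$ is not needed are accurate. The check uses only that a Scott open $H\subseteq\mathcal{O}(X)$ is an upper set, and it also covers $X=\emptyset$.
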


\begin{lemma}\cite{li2023some}
	Let $X$ be a nonempty $T_0$ space. For each $x\in X$, define a function $F_x: [X,Y]\to Y$ by $F_x(f)=f(x)$ for any $f\in [X,Y]$. Then $F_x$ is continuous.
\end{lemma}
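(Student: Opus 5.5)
We need to prove that $F_x\colon [X,Y]\to Y$, $f\mapsto f(x)$, is continuous. It suffices to show that for each open $V\subseteq Y$ the preimage $F_x^{-1}(V)=\{f\in TOP(X,Y)\mid f(x)\in V\}$ is open in the Isbell topology. The plan is to realize this preimage as a single subbasic set $N(H\leftarrow V)$ for a suitable Scott open $H\subseteq\mathcal{O}(X)$.

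The natural choice is
$$H_x=\{W\in\mathcal{O}(X)\mid x\in W\},$$
the principal ``point filter'' of $x$ in the lattice $\mathcal{O}(X)$. For any $f\in TOP(X,Y)$ we have $f(x)\in V$ iff $x\in f^{-1}(V)$ iff $f^{-1}(V)\in H_x$. Hence $F_x^{-1}(V)=N(H_x\leftarrow V)$, and continuity follows once we know $H_x\in\sigma(\mathcal{O}(X))$.

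The only real step is verifying that $H_x$ is Scott open in $\mathcal{O}(X)$.

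\begin{itemize}
\item \emph{Upper set.} If $x\in W\subseteq W'$, then $x\in W'$.
\item \emph{Inaccessible by directed joins.} Let $\mathcal{D}\subseteq\mathcal{O}(X)$ be directed with $\bigvee\mathcal{D}\in H_x$. In $\mathcal{O}(X)$ the join is the union, so $x\in\bigcup\mathcal{D}$, and therefore $x\in D$ for some $D\in\mathcal{D}$. Thus $D\in H_x$.
\end{itemize}

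So $H_x$ is Scott open. Note that nonemptiness of $X$ is needed only to have a point $x$ at all, and the $T_0$ hypothesis plays no role. There is no genuine obstacle; the key is simply recognizing that the point filter $H_x$ is Scott open, because joins in $\mathcal{O}(X)$ are unions.
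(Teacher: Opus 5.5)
Your proof is correct: $H_x=\{W\in\mathcal{O}(X)\mid x\in W\}$ is Scott open because directed joins in $\mathcal{O}(X)$ are unions, and this makes $F_x^{-1}(V)=N(H_x\leftarrow V)$ a subbasic open set of the Isbell topology. The paper gives no proof of its own here (the lemma is quoted from \cite{li2023some}), but your argument is the standard one and matches the computation the paper does in Example 4.4, where $N(\mathcal{H}_a\leftarrow U)=\{f\mid f(a)\in U\}$ for the point filter $\mathcal{H}_a$.
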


\begin{theorem}
	If $X$ is a nonempty $T_0$ space and $[X,Y]$ is a $PF$-well-filtered space, then $Y$ is a $PF$-well-filtered space.
\end{theorem}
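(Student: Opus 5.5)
Fix a point $x_0\in X$; this is possible because $X$ is nonempty. The maps $\xi\colon Y\to[X,Y]$ of Lemma 4.2 and $F_{x_0}\colon[X,Y]\to Y$ of Lemma 4.3 are continuous and satisfy $F_{x_0}\circ\xi=\mathrm{id}_Y$. I will move a filtered family in $Q(Y)$ forward along $\xi$, apply $PF$-well-filteredness in $[X,Y]$, and pull the conclusion back along $F_{x_0}$. The only delicate point is that the intersection upstairs must again be a principal filter. Plain retract arguments fail in general, which is consistent with the remark that these properties are not preserved under retracts.

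\emph{Setup.} Let $\mathcal{K}\subseteq Q(Y)$ be filtered with $\bigcap\mathcal{K}=\uparrow y\subseteq V$, where $V\in\mathcal{O}(Y)$. For each $K\in\mathcal{K}$ put $\widetilde K=\uparrow_{[X,Y]}\xi(K)$. This set is compact, being the saturation of a continuous image of a compact set, and it is nonempty and saturated. So $\widetilde K\in Q([X,Y])$, and the family $\{\widetilde K\}$ is filtered because $\xi$ is monotone.

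\emph{Key step.} I will show that $\bigcap_{K}\widetilde K=\uparrow\xi_y$. The inclusion $\supseteq$ is clear since $y\in K$ for every $K$. For $\subseteq$, take $f\in\bigcap\widetilde K$. Then for every $K$ there is $k_K\in K$ with $\xi_{k_K}\le f$ in the specialization order of $[X,Y]$. First I need that $\xi_k\le f$ implies $k\le f(x)$ for every $x\in X$: evaluation $F_x$ is continuous, hence monotone, so $k=F_x(\xi_k)\le F_x(f)=f(x)$. Thus for each $x$ the point $f(x)$ lies in $\uparrow K=K$ for all $K$, so $f(x)\in\bigcap\mathcal{K}=\uparrow y$. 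Second I need the converse: if $y\le f(x)$ for all $x$, then $\xi_y\le f$. Take any subbasic open set $N(H\leftarrow W)$ containing $\xi_y$. Then $\xi_y^{-1}(W)=X\in H$ forces $y\in W$. Since $W$ is an upper set, $f(x)\in W$ for all $x$, so $f^{-1}(W)=X\in H$. Finite intersections then give the claim for basic open sets. Hence $f\in\uparrow\xi_y$. This two-sided comparison of the specialization order with pointwise order on constant maps is the main obstacle, and the subbase computation above is how I will handle it.

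\emph{Conclusion.} The set $\mathcal{V}=F_{x_0}^{-1}(V)$ is open in $[X,Y]$ and contains $\uparrow\xi_y$, because $f\ge\xi_y$ gives $f(x_0)\ge y$ and so $f(x_0)\in V$. By $PF$-well-filteredness of $[X,Y]$, some $\widetilde K\subseteq\mathcal{V}$. Then $K=F_{x_0}(\xi(K))\subseteq F_{x_0}(\widetilde K)\subseteq V$, so $Y$ is $PF$-well-filtered. The $T_0$ property of $Y$ is assumed, as is implicit in the setting. Alternatively, $Y$ embeds into the $T_0$ space $[X,Y]$ via $\xi$, since $F_{x_0}\circ\xi=\mathrm{id}_Y$.
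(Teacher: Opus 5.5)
Your proof is correct, and it takes a different route from the paper's. The paper never handles filtered families directly. It relies on Proposition 3.6: a $T_0$ space is $PF$-well-filtered iff every Rudin set is a cut. It takes $A\in RD(Y)$ and asserts $\overline{\xi(A)}\in RD([X,Y])$, which is the fact that closures of continuous images of Rudin sets are Rudin. It then concludes that $\overline{\xi(A)}$ is a cut, shows $\xi_y\in\overline{\xi(A)}^{ul}$ for every $y\in A^{ul}$, and pulls back along $F_{x_0}$ via $\overline{\xi(A)}\subseteq F_{x_0}^{-1}(A)$ to get $A=A^{ul}$. You instead push a filtered family forward along $\xi$ and check the definition directly.

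Your approach is self-contained. It needs neither the topological Rudin lemma (Lemma 3.5) behind Proposition 3.6, nor the preservation of Rudin sets under continuous maps, which the paper invokes with only a pointer to Lemma 4.2. It also proves the comparison ``$y\le g(x)$ for all $x$ implies $\xi_y\le g$,'' which the paper states without justification; both proofs hinge on exactly this two-sided comparison between pointwise order and the specialization order of $[X,Y]$ on constant maps. The paper's route is shorter given its machinery and transfers verbatim to Theorem 4.5 by replacing $RD$ with $Irr_C$. Yours transfers too. The map $K\mapsto\uparrow\xi(K)$ is continuous from $Q(Y)$ to $Q([X,Y])$, since the preimage of $\Box\mathcal{W}$ is $\Box\xi^{-1}(\mathcal{W})$, so it sends irreducible families to irreducible ones. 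Your key-step computation of $\bigcap\uparrow\xi(K)$ never used filteredness.

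One small slip in the subbase computation: $\xi_y\in N(H\leftarrow W)$ does not force $y\in W$. If $y\notin W$, then $\emptyset=\xi_y^{-1}(W)\in H$, so $H=\mathcal{O}(X)$ because Scott open sets are upper sets, and $f\in N(H\leftarrow W)$ holds trivially. The uniform way to say this is that $\xi_y^{-1}(W)\subseteq f^{-1}(W)$ in both cases and $H$ is upward closed.
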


\begin{proof}
	For any $A\in RD(Y)$, we have $\overline{\xi(A)}\in RD([X,Y])$ by Lemma 4.2. Since $[X,Y]$ is a $PF$-well-filtered space, $\overline{\xi(A)}=\overline{\xi(A)}^{ul}$. 
	
	Claim: For any $y\in A^{ul}$, $\xi_y\in\overline{\xi(A)}^{ul}$.
	
	Assume that $g\in\overline{\xi(A)}^u$, then $\xi(A)\subseteq\overline{\xi(A)}$, which implies $\xi(A)\subseteq\downarrow g$. For each $x\in X$, given $a\in A$, then $\xi_a\le g$. It follows that $a\le g(x)$. Thus $g(x)\in A^u$. So $y\le g(x)$. Therefore, $\xi_y\le g$. We conclude that $\xi_y\in\overline{\xi(A)}^{ul}$.
	
	Hence, $\xi_y\in\overline{\xi(A)}$. Given $x_0\in X$, note that $\xi(A)\subseteq F_{x_0}^{-1}(A)$ and $F_{x_0}$ is continuous, we have $\overline{\xi(A)}\subseteq F_{x_0}^{-1}(A)$; as a result, $y\in A$, which implies $A^{ul}=A$. Consequently, $Y$ is a $PF$-well-filtered space by Proposition 3.6.
\end{proof}

Similarly, with the help of Proposition 3.7, we have the following theorem.

\begin{theorem}
	If $X$ is a nonempty $T_0$ space and $[X,Y]$ is a $PF$-sober space, then $Y$ is a $PF$-sober space.
\end{theorem}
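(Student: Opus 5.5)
The plan mirrors the proof of Theorem 4.4, with Proposition 3.7 in place of Proposition 3.6. By Proposition 3.7 it suffices to show that every $A\in Irr_C(Y)$ satisfies $A^{ul}=A$. The inclusion $A\subseteq A^{ul}$ always holds, so the work is to show that any $y\in A^{ul}$ lies in $A$.

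First, transport $A$ to the function space. By Lemma 4.2 the map $\xi\colon Y\to[X,Y]$ is continuous. The continuous image of an irreducible set is irreducible, and so is its closure. Hence $\overline{\xi(A)}\in Irr_C([X,Y])$. Since $[X,Y]$ is $PF$-sober, Proposition 3.7 gives $\overline{\xi(A)}=\overline{\xi(A)}^{ul}$.

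Next, fix $y\in A^{ul}$ and show $\xi_y\in\overline{\xi(A)}^{ul}$, exactly as in the Claim of Theorem 4.4. Let $g$ be an upper bound of $\overline{\xi(A)}$. Then $\xi_a\le g$ for every $a\in A$. Evaluating at each $x\in X$ via the continuous map $F_x$ of Lemma 4.3 gives $a\le g(x)$, so $g(x)\in A^u$ and therefore $y\le g(x)$. The one delicate point is concluding $\xi_y\le g$ from the pointwise inequalities $y\le g(x)$. For this I would argue directly with the subbasic open sets $N(H\leftarrow V)$. If $\xi_y\in N(H\leftarrow V)$, then $y\in V$ and $X=\xi_y^{-1}(V)\in H$. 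Since $y\le g(x)$ and $V$ is an upper set, $g(x)\in V$ for all $x$, so $g^{-1}(V)=X\in H$ and $g\in N(H\leftarrow V)$. Because membership in subbasic opens is inherited by $g$, it is inherited for all opens, which gives $\xi_y\le g$ in the specialization order.

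Finally, $\xi_y\in\overline{\xi(A)}^{ul}=\overline{\xi(A)}$. Pick $x_0\in X$, which is possible since $X$ is nonempty. We have $\xi(A)\subseteq F_{x_0}^{-1}(A)$, and $F_{x_0}^{-1}(A)$ is closed because $A$ is closed and $F_{x_0}$ is continuous. Hence $\overline{\xi(A)}\subseteq F_{x_0}^{-1}(A)$, so $y=F_{x_0}(\xi_y)\in A$. This shows $A^{ul}=A$, and Proposition 3.7 yields that $Y$ is $PF$-sober.

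The only step needing care is the order comparison $\xi_y\le g$, which the subbase argument above settles.
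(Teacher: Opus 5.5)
Your proposal is correct and follows the paper's route, which is the proof of Theorem 4.4 with Proposition 3.7 in place of Proposition 3.6; your subbasic-open argument also fills in the step $\xi_y\le g$ that the paper merely asserts. One small slip: $\xi_y\in N(H\leftarrow V)$ does not force $y\in V$, because when $H=\mathcal{O}(X)$ we have $\emptyset\in H$ and $y\notin V$ is possible. That case is trivial since $N(\mathcal{O}(X)\leftarrow V)$ is the whole space, or you can argue uniformly: $\xi_y^{-1}(V)\subseteq g^{-1}(V)$ and $H$ is an upper set.
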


Now we give a counterexample to show that the reverse direction is not true even if $X$ is a core-compact space.

\begin{example}
	Take $X=\{a,b\}$ endowed with the discrete topology and $Y=\mathbb{N}$ with the Alexandroff topology under the usual order. It is clear that $X$ is core-compact and $Y$ is $PF$-sober.
	
	Claim 1: The product $Y^2$ is not $PF$-well-filtered.
	
	For each $n\ge 2$, let $K_n=\uparrow(n,1)\cup\uparrow(1,2)$. Obviously, it is a compact saturated set. Thus the filtered intersection $\bigcap_{n\ge 2} K_n=\uparrow(1,2)=\mathbb{N}\times\uparrow2\in\mathcal{O}(Y^2)$, but $K_n\nsubseteq\uparrow(1,2)$ for each $n\ge 2$.
	
	Claim 2: $[X,Y]\cong Y^2$.
	
	Define two maps $\phi$ and $\psi$ as follows:	
	\[
	\phi: [X,Y] \to Y \times Y, \qquad
	\psi: Y \times Y \to [X,Y],
	\]
	where
	\[
	\phi(f)=(f(a),f(b)),
	\]
	and
	\[
	\psi(y_1,y_2)\in [X,Y]
	\]
	satisfies
	\[
	\psi(y_1,y_2)(a)=y_1, \qquad
	\psi(y_1,y_2)(b)=y_2.
	\]
	
	It is evident that $\phi$ and $\psi$ are mutually inverse maps. We only need to prove that both maps are continuous.
	
	Note that $\sigma(\mathcal{O}(X))=\{\emptyset, \mathcal{O}(X), \{\{a\},\{a,b\}\}, \{\{b\},\{a,b\}\}, \{\{a\},\{b\},\{a,b\}\}, \{\{a, b\}\}\}.$ Denote $\mathcal{H}_a=\{\{a\},\{a,b\}\}$ and $\mathcal{H}_b=\{\{b\},\{a,b\}\}$. For each $U\in\mathcal{O}(Y)$, 
	
	\begin{align*}
		N(\mathcal{H}_a\leftarrow U) &= \{f: X\to Y\mid f^{-1}(U)\in\mathcal{H}_a\}\\
		&=\{f: X\to Y\mid f(a)\in U\}.
	\end{align*}
	Similarly, $N(\mathcal{H}_b\leftarrow U)=\{f: X\to Y\mid f(b)\in U\}$. Consider $\{U\times V\mid U, V\in\mathcal{O}(Y)\}$, which is a basis for the topology of $Y\times Y$. Note that 
	 
	 \begin{align*}
	 	\phi^{-1}(U\times V) &= \{f: X\to Y\mid f(a)\in U, f(b)\in V\}\\
	 	&= N(\mathcal{H}_a\leftarrow U)\cap N(\mathcal{H}_b\leftarrow V)\in \mathcal{O}([X,Y]).
	 \end{align*}
	 As a result, $\phi$ is continuous.
	
	For any $\mathcal{H}\in\sigma(\mathcal{O}(X))$, $U\in\mathcal{O}(Y)$, 
	\begin{align*}
		N(\mathcal{H}\leftarrow U) &= \{f: X\to Y\mid f^{-1}(U)\in\mathcal{H}\}\\
		&= \{f: X\to Y\mid \exists V\in\mathcal{H} \,\text{such that}\, V\subseteq f^{-1}(U)\}\\
		&=\{f: X\to Y\mid \exists V\in\mathcal{H}, \forall t\in V, f(t)\in U\}\\
		&=\bigcup_{V\in\mathcal{H}}\bigcap_{t\in V} \{f: X\to Y\mid f(t)\in U\}.
	\end{align*}
	It follows that $$\phi(N(\mathcal{H}\leftarrow U))=\bigcup_{V\in\mathcal{H}}\bigcap_{t\in V}\phi(\{f: X\to Y\mid f(t)\in U\}).$$
	Since $V\in\{\emptyset, \{a,b\}, \{a\}, \{b\}\}$,
	
	\[\bigcap_{t\in V}\phi(\{f: X\to Y\mid f(t)\in U\})=\begin{cases}
		Y\times Y, & V=\emptyset\\
		U\times Y, & V=\{a\}\\
		Y\times U, & V=\{b\}\\
		U\times U, & V=\{a,b\},
	\end{cases}\]
	which is open in $Y\times Y$. Thus $\phi(N(\mathcal{H}\leftarrow U))$ is the union of some open sets. Therefore, $$\psi^{-1}(N(\mathcal{H}\leftarrow U))=\phi(N(\mathcal{H}\leftarrow U))\in\mathcal{O}(Y\times Y).$$
	We conclude that $[X,Y]$ is not $PF$-well-filtered. Obviously, it is not a $PF$-sober space.
\end{example}

\section{The reflective hull}

In this section, we let $PFWF$ denote the category of all $PF$-well-filtered spaces with continuous maps and $PFSOB$ denote the category of all $PF$-sober spaces with continuous maps. We prove that the reflective hull of $PFSOB$ is $TOP_0$ and then the reflective hull of $PFWF$ is $TOP_0$ because $PFSOB\subseteq PFWF$.

\begin{definition}\cite{gierz2003continuous}
	A full subcategory $\mathcal{K}$ of $TOP_0$ is reflective if for each $T_0$ space $X$, there exists $KX\in\mathcal{K}$ (the $K$-completion for $X$) and a continuous mapping $\mu_X: X\to KX$ (the $K$-reflection for $X$) satisfying
	the universal property: for any continuous mapping $f: X\to Z$ to a space $Z\in\mathcal{K}$, there exists a unique continuous mapping $g: KX\to Z$ such that $g\circ\mu_X=f$.
\end{definition}

\begin{lemma}\cite{hou2023reflective}
	Let $\mathcal{C}$ be a subcategory of $TOP_0$ which contains a non-$T_1$ space. Then the reflective hull of $\mathcal{C}$ exists.
\end{lemma}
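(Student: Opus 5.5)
The plan is to realise the reflective hull as the intersection $\mathcal{D}$ of all full, replete, reflective subcategories of $TOP_0$ containing $\mathcal{C}$, and then to prove that $\mathcal{D}$ is itself reflective by Freyd's adjoint functor theorem. If $\mathcal{D}$ is reflective, it is automatically the smallest reflective subcategory containing $\mathcal{C}$.

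\emph{Step 1: $\mathcal{D}$ is closed under limits.} $TOP_0$ is complete. A full replete reflective subcategory of a complete category is closed under all limits formed in the ambient category, and an intersection of limit-closed subcategories is limit-closed. So $\mathcal{D}$ is complete and the inclusion $\mathcal{D}\hookrightarrow TOP_0$ preserves limits. It then remains to verify the solution set condition.

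\emph{Step 2: every such subcategory contains the Sierpi\'nski space $\Sigma=\{0<1\}$.} This is where the non-$T_1$ hypothesis enters. Fix a non-$T_1$ space $Z\in\mathcal{C}$. Then there are $x\neq y$ in $Z$ with $x\le y$, and by $T_0$ there is an open set $U$ with $y\in U$ and $x\notin U$. Define two maps:
\begin{itemize}
\item $s\colon\Sigma\to Z$ by $0\mapsto x$ and $1\mapsto y$, which is continuous because $x\le y$;
\item $r=\chi_U\colon Z\to\Sigma$, which is continuous.
\end{itemize}
Since $r\circ s=\mathrm{id}_\Sigma$, the space $\Sigma$ is a retract of $Z$. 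A full replete reflective subcategory is closed under retracts: a retract is the equalizer of $\mathrm{id}$ and $s\circ r$, so this also follows from Step 1. Hence $\Sigma$ lies in every reflective $\mathcal{K}\supseteq\mathcal{C}$.

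\emph{Step 3: solution set.} Let $\mathcal{K}\supseteq\mathcal{C}$ be reflective with unit $\mu_X\colon X\to KX$. Continuous maps into $\Sigma$ are exactly characteristic maps of open sets, and $\Sigma\in\mathcal{K}$. The universal property therefore says that $V\mapsto\mu_X^{-1}(V)$ is a bijection $\mathcal{O}(KX)\to\mathcal{O}(X)$; in particular it is injective. Since $KX$ is $T_0$, points of $KX$ are determined by their open neighbourhood filters, which gives
\[
|KX|\le 2^{|\mathcal{O}(KX)|}\le 2^{|\mathcal{O}(X)|}.
\]
A map $f\colon X\to D$ with $D\in\mathcal{D}$ factors through $\mu_X$ for the reflection into any reflective $\mathcal{K}\supseteq\mathcal{C}$. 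To land inside $\mathcal{D}$, I instead factor $f$ through the $\mathcal{D}$-closure of its image. Concretely, I consider the limit-closure in $\mathcal{D}$ generated by the relevant small data. The same $\Sigma$-argument bounds the size of the "epi-part" of such factorizations by $2^{2^{|X|}}$. Representatives of $T_0$ spaces of bounded cardinality form a set, so this set serves as a solution set. Freyd's theorem then yields a left adjoint to $\mathcal{D}\hookrightarrow TOP_0$.

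The main obstacle is making Step 3 precise: producing, for each $f\colon X\to D$, a factorization through an object of $\mathcal{D}$ of bounded size. I would take the intersection of all $\mathcal{D}$-subobjects of $D$ through which $f$ factors, using limit-closure (intersections as wide pullbacks of embeddings). I would then show via maps into $\Sigma$ that the resulting map from $X$ is a $\mathcal{D}$-epimorphism whose codomain has at most $2^{|\mathcal{O}(X)|}$ points.
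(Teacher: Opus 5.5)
The paper gives no proof of this lemma; it is quoted from \cite{hou2023reflective}. There is therefore no in-text argument to compare with, and I judge yours on its own merits. Your outline is correct. Steps 1 and 2 are fine. The step you flag as the main obstacle closes in one line, using only that $\mathcal{D}$ is closed under limits in $TOP_0$ and contains $\Sigma$.

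Given $f\colon X\to D$ with $D\in\mathcal{D}$, let $D_0$ be the intersection, with the relative topology, of all subspaces $S\subseteq D$ that contain $f(X)$ and belong to $\mathcal{D}$. This is the wide pullback of the inclusions $S\hookrightarrow D$, so $D_0\in\mathcal{D}$, and $f$ corestricts to $f_0\colon X\to D_0$.

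Now suppose $V,W\in\mathcal{O}(D_0)$ satisfy $f_0^{-1}(V)=f_0^{-1}(W)$. The equalizer of $\chi_V,\chi_W\colon D_0\to\Sigma$ is a subspace of $D$ that contains $f(X)$ and lies in $\mathcal{D}$. By minimality it is all of $D_0$, i.e.\ $V=W$. Hence $|\mathcal{O}(D_0)|\le|\mathcal{O}(X)|$, and by $T_0$ we get $|D_0|\le 2^{|\mathcal{O}(X)|}$. A set of representatives of the members of $\mathcal{D}$ of at most that cardinality, together with all continuous maps from $X$ into them, is the required solution set.

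The detour through the units $\mu_X$ and the vague ``limit-closure generated by the relevant small data'' are unnecessary and can be dropped. Incidentally, your argument shows more: every replete full subcategory of $TOP_0$ that is closed under limits and contains $\Sigma$ is reflective.

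For comparison, the bijection $V\mapsto\mu_X^{-1}(V)$ you derive in Step 3 already gives a more concrete route. Its injectivity makes every unit $\mu_X$ an epimorphism in $TOP_0$. Indeed, if $g\circ\mu_X=h\circ\mu_X$ but $g(p)\neq h(p)$, pick an open $O$ containing exactly one of $g(p),h(p)$. Then $g^{-1}(O)\neq h^{-1}(O)$, although both have the same preimage under $\mu_X$. So every reflective subcategory containing $\mathcal{C}$ is epireflective. The reflective hull of $\mathcal{C}$ is then simply its epireflective hull, which exists by the standard theory and can be written down as the spaces homeomorphic to extremal subspaces of products of members of $\mathcal{C}$.

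That route gives an explicit description of the hull and avoids intersecting the conglomerate of all reflective subcategories. Yours needs neither the epireflective machinery nor any knowledge of epimorphisms and extremal monomorphisms in $TOP_0$. It uses only completeness, the Sierpi\'nski retract, and Freyd's theorem.
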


\begin{theorem}
	The reflective hull of $PFSOB$ in $TOP_0$ is $TOP_0$.
\end{theorem}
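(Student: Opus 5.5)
The plan is to use the fact that a reflective full subcategory of $TOP_0$ is closed under all limits that exist in $TOP_0$, in particular products and equalizers. Since $PFSOB$ contains non-$T_1$ spaces (for instance the Sierpi\'nski space, which is sober and hence $PF$-sober), Lemma 5.2 guarantees that the reflective hull $\mathcal{R}$ of $PFSOB$ exists. It is then enough to show that every $T_0$ space $X$ can be obtained from $PF$-sober spaces by limits, most conveniently as an equalizer of two continuous maps $f,g\colon Z\to W$ with $Z,W\in PFSOB$. Then $X\in\mathcal{R}$ for every $X$, which gives $\mathcal{R}=TOP_0$.

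The tool for recognising $PF$-sober spaces will be Proposition 3.7, which says that $Z$ is $PF$-sober if and only if $C=C^{ul}$ for every $C\in Irr_C(Z)$. Two points guide the construction.
\begin{itemize}
\item The only obstruction to $C=C^{ul}$ comes from non-principal irreducible closed sets $C$ whose upper bound set $C^u$ is too small to pin $C$ down. The extreme case is $C^u=\emptyset$, where $C^{ul}$ is the whole space.
\item We therefore want an enlargement $Z\supseteq X$ in which every irreducible closed set of $Z$ either is already a principal ideal $\downarrow z$, or has enough upper bounds in $Z$ that its lower bounds recover exactly $C$.
\end{itemize}
A natural first candidate is to adjoin to $X$, for each $C\in Irr_C(X)$, new points placed above $C$ but incomparable with the points outside $C$, so that $C^{ul}=C$ is forced. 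This must be done without making all irreducible closed sets principal. If they all became principal we would have built a sober space, and sober spaces are closed under limits, so $X$ could not be an equalizer of them in general.

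Concretely, I would first try the following. Take $Z=X\cup\{\top_C\mid C\in Irr_C(X)\setminus\{\downarrow x\mid x\in X\}\}$ with open sets $U\cup\{\top_C\mid C\cap U\neq\emptyset\}$ for $U\in\mathcal{O}(X)$. The verification splits into three steps.
\begin{enumerate}
\item Compute $Irr_C(Z)$ and check the condition of Proposition 3.7.
\item Show that $X$ is a subspace of $Z$ carved out by an equalizer condition. For this take $W$ to be a suitable $PF$-sober space built the same way, for example a Sierpi\'nski-type space, and let $f,g\colon Z\to W$ agree exactly on $X$.
\item Check that the equalizer in $TOP_0$ carries the subspace topology, which is automatic for equalizers in $TOP_0$.
\end{enumerate}
If this direct construction fails, a fallback is to form the product of $X$'s enlargement with a $PF$-sober but non-sober space such as $\mathbb{N}$ with the Alexandroff topology, as in Example 4.5. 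The aim would be to destroy the large families of upper bounds that break Proposition 3.7.

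The main obstacle I expect is precisely the $PF$-sobriety of the auxiliary space $Z$. This means controlling all irreducible closed subsets of $Z$, including new ones that mix old points with adjoined points, and showing that each equals its cut $C^{ul}$. Making $X$ an equalizer should then be a routine check once $Z$ is in hand.
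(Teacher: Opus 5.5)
There is a genuine gap, and it is in the step you expect to be routine. Your space $Z$ is exactly the sobrification of $X$. Identify $x$ with $\downarrow x$ and $\top_C$ with $C$; then your open sets $U\cup\{\top_C\mid C\cap U\neq\emptyset\}$ become the sets $\{C\in Irr_C(X)\mid C\cap U\neq\emptyset\}$. So $Z$ is sober, which is the situation your own remark warns against.

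Worse, $X$ cannot be an equalizer inside $Z$ for any choice of $W$. Suppose $f,g\colon Z\to W$ are continuous, $W$ is $T_0$, and $f|_X=g|_X$; then $f=g$. To see this, suppose $f(\top_C)\neq g(\top_C)$ and pick an open $O\subseteq W$ with, say, $f(\top_C)\in O$ and $g(\top_C)\notin O$.
\begin{itemize}
\item $f^{-1}(O)$ is an open neighbourhood of $\top_C$, so it meets $C$ in some point $x$.
\item Then $g(x)=f(x)\in O$.
\item Since $x\le\top_C$ in $Z$, we get $g(x)\le g(\top_C)$, which forces $g(\top_C)\in O$, a contradiction.
\end{itemize}

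The same argument shows more. Any subspace of a sober space that is the equalizer of two continuous maps into a $T_0$ space is itself sober: the generic point of the closure of an irreducible closed subset lies in the equalizer. So for non-sober $X$, no construction starting from a sober $Z$ can work. You would need a $Z$ that is $PF$-sober but not sober, in which $X$ is cut out by an equalizer, together with a $PF$-sober $W$. The proposal provides neither.

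The fallback via products is too vague to assess. It also needs its own verification, because Example 4.5 shows that a product of $PF$-sober spaces need not even be $PF$-well-filtered.

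What is missing is a concrete family of $PF$-sober spaces whose limit is $X$. The paper finds one without equalizers.
\begin{itemize}
\item It embeds $X$ into the sober space $Y=I^{\mathcal{O}(X)}$, with $I=[0,1]$ carrying the Scott topology, by sending $x$ to the characteristic vector of the open sets containing it.
\item It shows, via Proposition 3.7, that every punctured subspace $A_p=Y\setminus\{p\}$ is $PF$-sober. An irreducible closed $C\subseteq A_p$ equals $A_p\cap\downarrow_Y q$, where $q$ is the generic point of its closure in $Y$. If $q\neq p$, then $C$ is principal. If $q=p=\top$, then $C=A_p$. Otherwise, raising one coordinate $U$ with $p_U<1$ gives points $q_n>p$ in $C^u$ with $\bigcap_n\downarrow_Y q_n=\downarrow_Y p$, so $C^{ul}=C$.
\item Then $e(X)=\bigcap_{p\notin e(X)}A_p$ is an intersection of subspaces of $Y$, which is a limit in $TOP_0$ of $PF$-sober spaces.
\item The universal-property argument, equivalently closure of reflective subcategories under limits, then places $X$ in every reflective subcategory containing $PFSOB$.
\end{itemize}
Your limit-based framing is compatible with this. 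However, the construction of the spaces is the whole content of the proof, and the one you chose cannot work.
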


\begin{proof}
	It is easy to see that $PFSOB$ contains a non-$T_1$ space. Then the reflective hull of $PFSOB$ exists by Lemma 5.2. Let $I=[0,1]$ endowed with the Scott topology under the usual order. It is clear that $I$ is sober; consequently, it is $PF$-sober. For any $T_0$ space $X$, define $Y=I^{\mathcal{O}(X)}$. Then $Y$ is sober and $e: X\to Y$ sends each $x$ to $e(x)$, where 
	
	$$e(x)_U=\begin{cases}
		1, & x\in U\\
		0, & x\notin U
	\end{cases}$$
	for every $U\in\mathcal{O}(X)$ is a topological embedding since $e(U)=e(X)\cap\{z\in Y\mid z_U>\frac{1}{2}\}$.
	
	Claim: For each $p\in Y$, $A_p=Y\setminus \{p\}$ endowed with the relative topology from $Y$ is $PF$-sober.
	
	Take $C\in Irr_c(A_p)$, we need to prove that $C_{A_p}^{ul}=C$. Since the inclusion map from $A_p$ to $Y$ is continuous and $Y$ is sober, $\overline{C}^Y=\downarrow_Y q$ for some $q\in Y$. Note that $C$ is closed in $A_p$, thus $C=A_p\cap\downarrow_Y q$. Consider the following three situations:
	
	(1) If $q\neq p$, then $q\in A_p$ and $C=A_p\cap\downarrow_Y q=\downarrow_{A_p}q$ is a cut in $A_p$.
	
	(2) If $q=p=\top$, then $C=A_p\cap\downarrow_Y q=A_p$ is a cut in $A_p$.
	
	(3) If $q=p\neq\top$, take $U\in\mathcal{O}(X)$ such that $p_U<1$. Define
	
	 $$(q_n)_V=\begin{cases}
		p_U+\frac{1-p_U}{n+1}, & U=V\\
		p_V, & U\neq V
	\end{cases}$$
	for each $n\ge 1$. Since $q_n>p$, we have $q_n\in A_p$ and $\{q_n\mid n\ge 1\}\subseteq C_{A_p}^u$. It is clear that $\bigcap_{n\ge 1}\downarrow_Y q_n=\downarrow_Y p$. Consequently, $C_{A_p}^{ul}\subseteq\bigcap_{n\ge 1}\downarrow_Y q_n=\downarrow_Y p$. So $C_{A_p}^{ul}\subseteq\downarrow_Y p\cap A_p=\downarrow_Y q\cap A_p=C$, which implies that $C$ is a cut in $A_p$.
	
	Suppose $\mathcal{K}\subseteq TOP_0$ is a reflective category with $PFSOB\subseteq\mathcal{K}$. Given a $T_0$ space $X$ and let $\eta: X\to KX$ be the $K$-reflection. Since $Y\in PFSOB$, there exists a unique continuous map $f: KX\to Y$ such that $f\circ\eta=e$. For each $p\in Y\setminus e(X)$, the map $e_p: X\to A_p$ is a topological embedding, and $A_p\in\mathcal{K}$ by the Claim above. Thus there exists a unique continuous map $f_p: KX\to A_p$ such that $f_p\circ\eta=e_p$. Let $i_p$ be the inclusion from $A_p$ to $Y$. Then $i_p\circ f_p=f$ by the $K$-reflection $\eta$ and the object $Y$. Therefore, $f(KX)\subseteq f_p(KX)\subseteq A_p$ for each $p\in Y\setminus e(X)$, which implies that $f(KX)\subseteq\bigcap_{p\in Y\setminus e(X)} Y\setminus\{p\}=e(X)$. Let $e^{-1}: e(X)\to X$ denote the inverse homeomorphism. Then $r=e^{-1}\circ f: KX\to X$ is continuous. Thus $r\circ\eta=e^{-1}\circ f\circ\eta=e^{-1}\circ e=id_X$. 	Note that $(\eta\circ r)\circ\eta=\eta\circ (r\circ\eta)=\eta$. So $\eta\circ r=id_{KX}$ by the $K$-reflection $\eta$ and the object $KX$. We conclude that $X$ is homeomorphic to $KX$ with the continuous map $r$ and $\eta$. Therefore, $X\in\mathcal{K}$, which implies $\mathcal{K}=TOP_0$.
	
\end{proof}

\begin{corollary}
	The reflective hull of $PFWF$ in $TOP_0$ is $TOP_0$.
\end{corollary}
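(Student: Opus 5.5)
The plan is to reduce the corollary to Theorem 5.3 using only the inclusion $PFSOB\subseteq PFWF$ and the fact that $TOP_0$ is the largest possible reflective full subcategory of itself. First I will recall that every $PF$-sober space is $PF$-well-filtered; this was already used at the start of the proof of Theorem 3.4, and it also follows from Propositions 3.6 and 3.7 because $RD(X)\subseteq Irr_C(X)$. Hence $PFSOB$ is a full subcategory of $PFWF$, and both are full subcategories of $TOP_0$.

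Second, I will check that the reflective hull of $PFWF$ exists. The non-$T_1$ $PF$-sober space used in the proof of Theorem 5.3 also lies in $PFWF$, so Lemma 5.2 applies. Concretely, one can take $[0,1]$ with the Scott topology: it is sober, hence $PF$-sober, hence $PF$-well-filtered.

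Third, I will use that the reflective hull of a class $\mathcal{C}$ is the smallest reflective full subcategory of $TOP_0$ containing $\mathcal{C}$. Let $\mathcal{K}$ be any reflective full subcategory of $TOP_0$ containing $PFWF$. Then $\mathcal{K}$ also contains $PFSOB$. The second half of the proof of Theorem 5.3 shows that every reflective $\mathcal{K}\supseteq PFSOB$ equals $TOP_0$: it starts from an arbitrary such $\mathcal{K}$ and uses only that $Y$ and each $A_p$ belong to $\mathcal{K}$. Therefore every such $\mathcal{K}$ equals $TOP_0$, and so the reflective hull of $PFWF$ is $TOP_0$.

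There is no substantial obstacle. The only point needing care is the third step, where I must make sure that Theorem 5.3 was proved for an arbitrary reflective $\mathcal{K}$ containing $PFSOB$, and not just for the hull. If it had been proved only for the hull, the same conclusion would still follow from monotonicity: the hull of $PFWF$ is a reflective subcategory containing $PFSOB$, so it contains the hull of $PFSOB$, which is $TOP_0$.
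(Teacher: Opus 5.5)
Your proposal is correct and follows the same route as the paper, which deduces the corollary from Theorem 5.3 via the inclusion $PFSOB\subseteq PFWF$. Your extra checks — existence of the hull via Lemma 5.2, the observation that the proof of Theorem 5.3 works for an arbitrary reflective $\mathcal{K}\supseteq PFSOB$, and the fallback monotonicity argument — make explicit what the paper leaves implicit.
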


\begin{proof}
	This is an immediate consequence of Theorem 5.3.
\end{proof}

\section*{Declarations}

Conflicts of interest\quad The authors declare no conflicts of interest.

Funding: This work is supported by the National Natural Science Foundation of China (Grant Nos.12231007 and 12571498).


\begin{thebibliography}{14}
		\expandafter\ifx\csname natexlab\endcsname\relax\def\natexlab#1{#1}\fi
		\providecommand{\url}[1]{\texttt{#1}}
		\providecommand{\href}[2]{#2}
		\providecommand{\path}[1]{#1}
		\providecommand{\DOIprefix}{doi:}
		\providecommand{\ArXivprefix}{arXiv:}
		\providecommand{\URLprefix}{URL: }
		\providecommand{\Pubmedprefix}{pmid:}
		\providecommand{\doi}[1]{\href{http://dx.doi.org/#1}{\path{#1}}}
		\providecommand{\Pubmed}[1]{\href{pmid:#1}{\path{#1}}}
		\providecommand{\bibinfo}[2]{#2}
		\ifx\xfnm\relax \def\xfnm[#1]{\unskip,\space#1}\fi
		\bibitem[{Gierz et~al.(2003)Gierz, Hofmann, Keimel, Lawson, Mislove and
			Scott}]{gierz2003continuous}
		\bibinfo{author}{Gierz, G.}, \bibinfo{author}{Hofmann, K.H.},
		\bibinfo{author}{Keimel, K.}, \bibinfo{author}{Lawson, J.D.},
		\bibinfo{author}{Mislove, M.}, \bibinfo{author}{Scott, D.S.},
		\bibinfo{year}{2003}.
		\newblock \bibinfo{title}{Continuous lattices and domains}.
		Volume~\bibinfo{volume}{93}.
		\newblock \bibinfo{publisher}{Cambridge University Press}.
		\bibitem[{Goubault-Larrecq(2013)}]{goubault2013non}
		\bibinfo{author}{Goubault-Larrecq, J.}, \bibinfo{year}{2013}.
		\newblock \bibinfo{title}{Non-Hausdorff topology and domain theory: Selected
			topics in point-set topology}. Volume~\bibinfo{volume}{22}.
		\newblock \bibinfo{publisher}{Cambridge University Press}.
		\bibitem[{Heckmann and Keimel(2013)}]{heckmann2013quasicontinuous}
		\bibinfo{author}{Heckmann, R.}, \bibinfo{author}{Keimel, K.},
		\bibinfo{year}{2013}.
		\newblock \bibinfo{title}{Quasicontinuous domains and the Smyth powerdomain}.
		\newblock \bibinfo{journal}{Electronic Notes in Theoretical Computer Science}
		\bibinfo{volume}{298}, \bibinfo{pages}{215--232}.
		\bibitem[{Hofmann and Lawson(1978)}]{hofmann1978spectral}
		\bibinfo{author}{Hofmann, K.H.}, \bibinfo{author}{Lawson, J.D.},
		\bibinfo{year}{1978}.
		\newblock \bibinfo{title}{The spectral theory of distributive continuous
			lattices}.
		\newblock \bibinfo{journal}{Transactions of the American Mathematical Society}
		\bibinfo{volume}{246}, \bibinfo{pages}{285--310}.
		\bibitem[{Hou et~al.(2023)Hou, Li, Miao and Zhao}]{hou2023reflective}
		\bibinfo{author}{Hou, H.}, \bibinfo{author}{Li, Q.}, \bibinfo{author}{Miao,
			H.}, \bibinfo{author}{Zhao, D.}, \bibinfo{year}{2023}.
		\newblock \bibinfo{title}{The reflective hulls of some subcategories in the
			category of all $T_0$ spaces}.
		\newblock \bibinfo{journal}{Houston Journal of Mathematics}
		\bibinfo{volume}{49}, \bibinfo{pages}{381--395}.
		\bibitem[{Li et~al.(2023)Li, Jin, Miao and Chen}]{li2023some}
		\bibinfo{author}{Li, Q.}, \bibinfo{author}{Jin, M.}, \bibinfo{author}{Miao,
			H.}, \bibinfo{author}{Chen, S.}, \bibinfo{year}{2023}.
		\newblock \bibinfo{title}{On some results related to sober spaces}.
		\newblock \bibinfo{journal}{Acta Mathematica Scientia} \bibinfo{volume}{43},
		\bibinfo{pages}{1477--1490}.
		\bibitem[{Li et~al.(2020)Li, Yuan and Zhao}]{li2020unified}
		\bibinfo{author}{Li, Q.}, \bibinfo{author}{Yuan, Z.}, \bibinfo{author}{Zhao,
			D.}, \bibinfo{year}{2020}.
		\newblock \bibinfo{title}{A unified approach to some non-hausdorff topological
			properties}.
		\newblock \bibinfo{journal}{Mathematical Structures in Computer Science}
		\bibinfo{volume}{30}, \bibinfo{pages}{997--1010}.
		\bibitem[{Liu et~al.(2021)Liu, Li and Ho}]{liu2021function}
		\bibinfo{author}{Liu, B.}, \bibinfo{author}{Li, Q.}, \bibinfo{author}{Ho,
			W.K.}, \bibinfo{year}{2021}.
		\newblock \bibinfo{title}{On function spaces related to d-spaces}.
		\newblock \bibinfo{journal}{Topology and its Applications}
		\bibinfo{volume}{300}, \bibinfo{pages}{107757}.
		\bibitem[{Lu and Li(2017)}]{lu2017weak}
		\bibinfo{author}{Lu, C.}, \bibinfo{author}{Li, Q.}, \bibinfo{year}{2017}.
		\newblock \bibinfo{title}{Weak well-filtered spaces and coherence}.
		\newblock \bibinfo{journal}{Topology and its Applications}
		\bibinfo{volume}{230}, \bibinfo{pages}{373--380}.
		\bibitem[{Shen et~al.(2019)Shen, Xi, Xu and Zhao}]{shen2019well}
		\bibinfo{author}{Shen, C.}, \bibinfo{author}{Xi, X.}, \bibinfo{author}{Xu, X.},
		\bibinfo{author}{Zhao, D.}, \bibinfo{year}{2019}.
		\newblock \bibinfo{title}{On well-filtered reflections of $T_0$ spaces}.
		\newblock \bibinfo{journal}{Topology and its Applications}
		\bibinfo{volume}{267}, \bibinfo{pages}{106869}.
		\bibitem[{Shen et~al.(2020)Shen, Xi, Xu and Zhao}]{shen2020open}
		\bibinfo{author}{Shen, C.}, \bibinfo{author}{Xi, X.}, \bibinfo{author}{Xu, X.},
		\bibinfo{author}{Zhao, D.}, \bibinfo{year}{2020}.
		\newblock \bibinfo{title}{On open well-filtered spaces}.
		\newblock \bibinfo{journal}{Logical Methods in Computer Science}
		\bibinfo{volume}{16}.
		\bibitem[{Shen et~al.(2023)Shen, Xi and Zhao}]{shen2023further}
		\bibinfo{author}{Shen, C.}, \bibinfo{author}{Xi, X.}, \bibinfo{author}{Zhao,
			D.}, \bibinfo{year}{2023}.
		\newblock \bibinfo{title}{Further studies on open well-filtered spaces}.
		\newblock \bibinfo{journal}{Electronic Notes in Theoretical Informatics and
			Computer Science} \bibinfo{volume}{2}.
		\bibitem[{Xu(2025)}]{xu2025strongly}
		\bibinfo{author}{Xu, X.}, \bibinfo{year}{2025}.
		\newblock \bibinfo{title}{Strongly well-filtered spaces and strong d-spaces}.
		\newblock \bibinfo{journal}{Topology and its Applications} ,
		\bibinfo{pages}{109654}.
		\bibitem[{Xu et~al.(2020)Xu, Shen, Xi and Zhao}]{xu2020t0}
		\bibinfo{author}{Xu, X.}, \bibinfo{author}{Shen, C.}, \bibinfo{author}{Xi, X.},
		\bibinfo{author}{Zhao, D.}, \bibinfo{year}{2020}.
		\newblock \bibinfo{title}{On $T_0$ spaces determined by well-filtered spaces}.
		\newblock \bibinfo{journal}{Topology and its Applications}
		\bibinfo{volume}{282}, \bibinfo{pages}{107323}.	
\end{thebibliography}

\end{document}